\newcommand{\C}{\mathbb C}
\newcommand{\R}{\mathbb R}
\newcommand{\transp}{\,^t}
\newtheorem{theo}{Theorem}[section]
\newtheorem{lemma}[theo]{Lemma}
\newtheorem{cor}[theo]{Corollary}
\theoremstyle{remark}
\newtheorem{remark}[theo]{Remark}
\theoremstyle{example}
\theoremstyle{definition}
\newtheorem{defi}[theo]{Definition}
\numberwithin{equation}{section}
\begin{document}

\begin{abstract} The existence of a nondefective  stationary disc  attached to a nondegenerate  model quadric in  $\C^N$  is a necessary condition  to ensure the unique $1$-jet determination of the   lifts of  a  key family of  stationary discs \cite{be-me2}.
 In this paper,  we  give an elementary proof of the equivalence  when the   model quadric  is strongly pseudoconvex, recovering a result of Tumanov  \cite{tu}. Our proof is based on the explicit expression of  stationary discs, and opens up a conjecture for  the unique $1$-jet determination  to hold when the model is not necessarily  strongly pseudoconvex.

\end{abstract}

\author{Florian Bertrand  and Francine Meylan}
\title[The $1$-jet determination  of  stationary discs]{The $1$-jet determination  of  stationary discs attached to  generic CR submanifolds 
}
\thanks{Research of the first  author was  supported by the Center for Advanced Mathematical Sciences and by an URB grant from the American University of Beirut.}



\maketitle


\section*{Introduction} 
In the  paper \cite{be-me2},  we  gave an explicit construction of  stationary discs attached to  a strongly Levi nondegenerate model quadric,
and obtained a necessary condition, namely the existence of a nondefective  stationary disc,   to ensure the unique $1$-jet determination of their lifts. As emphasized in  \cite{be-me}, this is  a crucial step to deduce the $2$-jet determination of CR automorphisms of  strongly Levi nondegenerate CR submanifolds. Tumanov proved in \cite{tu}  that the existence of a nondefective stationary disc   is also a sufficient  condition if the model is strongly pseudoconvex. His proof is based on the fact that, in the strongly pseudoconvex case,   lifts  of stationary discs can be made 
"supporting" in a suitable  (nonlinear) system of coordinates (Proposition 3.9  in \cite{tu}, see also Proposition 2.2' in \cite{sc-tu}).
		
One purpose of the present  paper is   to  address   the question of  recovering  Tumanov's result using  the  explicit construction of stationary discs, without performing a nonlinear change of coordinates that would not preserve the model quadric (Theorem \ref{propder}). The elementary proof  we give  reveals a possible generalization to merely  strongly Levi nondegenerate  model quadric of this unique  $1$-jet determination (Subsection 4.1 and its open questions). 

Another purpose of this paper   is to  discuss the link between various definitions related to the model quadric (see Lemma \ref{nil0}),  and also between different technics  used to obtain  finite jet determination of biholomorphisms preserving CR  submanifolds.  We  thus generalize, for a class of Levi generating submanifolds,  the $2$-jet determination of biholomorphisms preserving strongly Levi  nondegenerate  CR  submanifolds obtained in \cite{be-bl-me}  to  merely Levi   nondegenerate CR submanifolds (see Theorem \ref{nil1}). We recall that   the $2$-jet determination of biholomorphisms preserving  (Levi generating)  Levi  nondegenerate  CR  real-analytic submanifolds does not hold in general, due to the counterexamples obtained in \cite{gr-me}. We refer to the surveys of Zaitsev \cite{za} and, Lamel and Mir \cite{la-mi3}, and references therein, for an overview of  finite jet determination problems.

\section{Hermitian matrices associated to the Levi map of  generic  submanifolds}

In this section, we recall various definitions related to the Levi map (see p. 41 \cite{BER}) of a generic submanifold via its associated Hermitian matrices. We also  recall recent results about $2$-jet determination of CR mappings in higher codimension, and obtain a new result in this direction. 

\vspace{0.5cm}

Let $M \subset \C^{n+d}$ be a  $\mathcal{C}^{4}$ generic real submanifold of real codimension $d\ge 1$ through $p=0$ given locally by the following system of equations 
\begin{equation}\label{eqred0}
\begin{cases}
 \Re e  w_1= \transp\bar z A_1 z+ O(3)\\
\ \ \ \ \vdots \\
\Re e  w_d = \transp\bar z A_d z+ O(3)
\end{cases}
\end{equation}
where $A_1,\hdots,A_d$ are Hermitian matrices of size $n$. In the remainder O(3), the variables $z$ and $\Im m w$  are respectively of weight one and two. 
We associate to $M$ the model quadric $M_H$ given by
\begin{equation}\label{eqred1}
\begin{cases} \Re e  w_1 = \transp\bar z A_1 z\\
\ \ \ \ \vdots \\
\Re e  w_d = \transp\bar z A_d z.
\end{cases}
\end{equation}

  Recall that $M$ is of finite type at $0$  with $2$ the only  H\"ormander number  if and only  $M$ is Levi generating at $0$ if and only if the matrices $A_1,\ldots,A_d$ are linearly independent (see \cite{bl-me} for instance). We say that $M$ is {\it Levi nondegenerate at $0$} 
  in case $\cap_{j=1}^d\mathrm{Ker}A_j=\{0\}$.   The submanifold $M$ is {\it strongly Levi nondegenerate at $0$} (resp. {\it strongly pseudoconvex at $0$}) if there exists 
$b \in \Bbb R^d$ such that the matrix $\sum_{j=1}^d b_jA_j$ 
is invertible (resp. positive definite). Note that in case $M$ is strongly pseudoconvex,  the matrix  $\sum_{j=1}^d b_jA_j$ may be chosen to 
be the identity matrix after a linear holomorphic change of variables. We also recall the following definition from \cite{be-me}.
\begin{defi}\label{definondeg}
  Let $M$ be  strongly Levi nondegenerate at $0$ given by \eqref{eqred0} and let $b\in \R^d$ be such that $\sum_{j=1}^d b_jA_j$ is invertible. 
We say that  $M$ is {\it $\mathfrak{D}$-nondegenerate} at 0 if there exists $V \in \C^{n}$ such that, if we set $D_0$ to be the $n \times d$ matrix whose $j^{th}$ column is
 $A_jV$, then $\Re e (\transp \overline D_0 (\sum_{j=1}^d b_jA_j)^{-1}D_0)$ is invertible.
\end{defi}
The following lemma was pointed out to us by Bernhard Lamel; we thank him for that. This lemma shows the connection between  the Segre sets introduced by Baouendi, Ebenfelt and Rothschild (see Chapter $10$ in \cite{BER})  and the matrix    $D_0$  introduced  in  
the previous definition. 
\begin{lemma}\label{nil0}
 The model quadric   $M_H$  given by \eqref{eqred1} is of finite type with    Segre number  $2$ at $0$ if and only if there exists a $V$ such that the $n \times d$ matrix whose $j^{th}$ column is
 $A_jV$ has rank $d.$  
\end{lemma}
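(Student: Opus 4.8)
The plan is to compute the first two Segre sets of $M_H$ at $0$ explicitly and to read off the dimension of the second one. Writing $Z=(z,w)$ and complexifying the equations \eqref{eqred1} by replacing $(\bar z,\bar w)$ with independent holomorphic variables $(\chi,\tau)\in\C^n\times\C^d$, the complexified defining equations read $w_k+\tau_k=2\,\transp\chi A_k z$ for $k=1,\dots,d$. Hence the Segre variety of $0$ is $Q_0=\{(z,w):w=0\}$, and the first Segre set is $N_1=Q_0=\C^n\times\{0\}$, of dimension $n$. Following the iterated construction of \cite{BER} (Chapter 10), the second Segre set $N_2$ is the union of the Segre varieties $Q_{\zeta_0}$ taken over those $\zeta_0$ whose conjugate lies in $N_1$, which forces $\tau_0=0$ and leaves $\chi_0$ free; explicitly, $N_2$ is the image of the holomorphic map
\[
\Phi:\C^n\times\C^n\longrightarrow \C^n\times\C^d,\qquad
\Phi(z,\chi)=\bigl(z,\,(2\,\transp\chi A_1 z,\dots,2\,\transp\chi A_d z)\bigr).
\]

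Next I would compute the generic rank of $\Phi$, which equals $\dim N_2$. Its differential has block lower-triangular form with an $n\times n$ identity block coming from the first component $z$, so that $\operatorname{rank} d\Phi_{(z,\chi)}=n+\operatorname{rank}B(z)$, where $B(z)$ is the $d\times n$ matrix whose $k$-th row is $\transp(A_k z)$. Since $\operatorname{rank}B(z)=\dim_{\C}\spanc\{A_1 z,\dots,A_d z\}$, the maximal rank of $\Phi$ is $n+r$ with $r=\max_{z}\dim_{\C}\spanc\{A_1 z,\dots,A_d z\}$. Thus $\dim N_2=n+r$, and $N_2$ fills a neighborhood of $0$ (i.e. has full dimension $n+d$) if and only if $r=d$, that is, if and only if there exists $V\in\C^n$ for which $A_1 V,\dots,A_d V$ are linearly independent — which is precisely the condition that the $n\times d$ matrix with $j$-th column $A_j V$ has rank $d$.

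It then remains to invoke the characterization of finite type by Segre sets. Since $N_1$ has dimension $n<n+d$, the Segre number of $M_H$ at $0$ is always at least $2$, and $M_H$ is of finite type with Segre number $2$ exactly when $N_2$ already has nonempty interior, i.e. when $\dim N_2=n+d$. Combining this with the previous paragraph yields the stated equivalence in both directions.

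The main points requiring care are twofold. First, one must correctly account for the conjugation in passing from $N_1$ to $N_2$ and justify that the dimension of the (constructible) set $N_2$ equals the generic rank of the parametrizing map $\Phi$; this is the crux of the argument, everything else being linear algebra. Second, the passage from ``there exists $z$'' to ``generically in $z$'' has to be noted: $\operatorname{rank}B(z)$ is lower semicontinuous and attains its maximum on a Zariski-dense open subset of the irreducible space $\C^n$, since the $d\times d$ minors of $B(z)$ are polynomial in $z$, so a single $V$ realizing rank $d$ forces rank $d$ generically, matching the value $r$ above.
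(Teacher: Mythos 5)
Your proof is correct and takes essentially the same route as the paper: both identify the second Segre set of $M_H$ at $0$ as the image of the map $(z,\zeta)\mapsto (z,\transp\zeta A_1z,\dots,\transp\zeta A_dz)$ and then apply the finite type criterion of Baouendi--Ebenfelt--Rothschild (Theorem 10.5.5 in \cite{BER}), which reduces the statement to the generic rank of this map being $n+d$. The additional details you provide (the complexification, the block-triangular differential, and the semicontinuity of the rank in $z$) are precisely the steps the paper leaves implicit.
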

\begin{proof}       
Let  $S_2(0)$ be the Segre set of order $2$ at $0.$  A direct computation yields to
\begin{equation*}
 S_2(0)= \{ (z,w) \in \Bbb C^n \times \Bbb C^d \ |\ \exists \zeta \in \Bbb C^n,  \ \ w_j={^t\zeta}A_jz\}.
\end{equation*}
 Let $d_2(0)$ be the generic rank of the map 
\begin{equation} \label{eqpr}
 pr: (z,\zeta) \in \Bbb C^n \times \Bbb C^n \longrightarrow (z, {^t\zeta}A_1z, \dots,  {^t\zeta}A_dz) \in \Bbb C^{n+d} .
\end{equation}
According to Theorem 10.5.5 in \cite{BER}, the quadric  $M_H$ is of finite type at $0$ with Segre number $2$ at $0$ if and only if $d_2(0) = n+d.$
According to the form of the map $pr$ given by  \eqref{eqpr}, we obtain that  $d_2(0) = n+d$ if and only if 
the $n \times d$ matrix whose $j^{th}$ column is
 $A_jV$ has rank $d$ for some $V.$ This achieves the proof.
\end{proof}

\vspace{0.5cm}

We now highlight two recent results on  $2$-jet determination of CR automorphisms. Tumanov obtained in \cite{tu3}  the following theorem for strongly pseudoconvex  submanifolds.
\begin{theo} \label{strictly}\cite{tu3}
Let $M \subset \C^N$ be a  $\mathcal{C}^4$ generic  real submanifold  given by \eqref{eqred0}.  Assume that $M$ is strongly pseudoconvex and that the matrices 
$A_1,\ldots,A_d$ are linearly independent. Then any germ at $0$ of CR automorphism  of $M$ of class $\mathcal{C}^3$  is uniquely determined by its $2$-jet at $0.$
\end{theo}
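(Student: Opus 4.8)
The plan is to convert the $2$-jet determination of CR automorphisms into the unique $1$-jet determination of the lifts of stationary discs, the theme of the present paper, rather than to follow the supporting-disc route of \cite{tu}. First I would reduce to the model geometry: since $M$ is strongly pseudoconvex I may normalize so that $\sum_{j=1}^d b_j A_j$ is the identity matrix, whence for the matrix $D_0$ of Definition \ref{definondeg} one has $\Ree({}^t\overline{D_0} D_0)$ invertible exactly when the columns $A_1V,\dots,A_dV$ are linearly independent for some $V$. By Lemma \ref{nil0} such a $V$ exists precisely because $A_1,\dots,A_d$ are linearly independent, so the hypotheses of the theorem force $M$ to be $\mathfrak{D}$-nondegenerate at $0$; this is the condition guaranteeing the existence of a nondefective stationary disc and hence the unique $1$-jet determination of lifts.

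Next I would exploit the invariance of stationary discs. Using that $M$ is Levi nondegenerate, a germ at $0$ of $\mathcal{C}^3$ CR automorphism $F$ with $F(0)=0$ extends to a germ of biholomorphism of $\C^N$, and because $F(M)=M$ it sends a stationary disc $f$ with $f(1)=0$ to the stationary disc $F\circ f$, lifting it in the conormal bundle $N^*M$ to
\begin{equation*}
\bigl(F(f(\zeta)),\, {}^t dF(f(\zeta))^{-1} f^*(\zeta)\bigr),
\end{equation*}
where $(f,f^*)$ is the lift of $f$. The crux is then a jet count at the distinguished boundary point $\zeta=1$: the base component has derivative $dF(0)\,f'(1)$ and sees only the $1$-jet of $F$, while the conormal component and its first derivative at $\zeta=1$ bring in $dF(0)$ and the Hessian $d^2F(0)$, that is exactly the data recorded by $j^2_0F$. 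Thus the $1$-jet at $1$ of the lift of $F\circ f$ is completely determined by $j^2_0F$ together with the fixed $1$-jet of $(f,f^*)$.

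With this in place I would invoke the unique $1$-jet determination of lifts established for the $\mathfrak{D}$-nondegenerate (here strongly pseudoconvex) case (Theorem \ref{propder}): if two CR automorphisms share the same $2$-jet at $0$, then for every disc $f$ of the family the associated lifts of the images have the same $1$-jet at $1$, hence coincide, so $F\circ f$ is uniquely determined. I would then finish with a filling argument, observing that the boundaries $f(b\Delta)$ of the stationary discs through $0$ sweep out a neighborhood of $0$ in $M$; as $F$ is pinned down on all these boundaries it is determined on $M$ near $0$, and since $M$ is generic this determines the germ $F$.

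The step I expect to be the main obstacle is the jet transfer together with the persistence of these discs off the model. One must check that the conormal transformation turns precisely the full Hessian $d^2F(0)$ — and nothing more — into first-order lift data, and that the family of stationary discs constructed for $M_H$ in \cite{be-me2}, along with the nondefectiveness underlying the $1$-jet determination, survives the $O(3)$ perturbation in \eqref{eqred0} via the implicit function theorem. The filling and the extension of $F$ are comparatively standard once $M$ is known to be $\mathfrak{D}$-nondegenerate, which is where Lemma \ref{nil0} and Definition \ref{definondeg} enter.
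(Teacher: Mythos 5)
Your second, third, and fourth paragraphs are essentially the ``usual stationary disc method'' that the paper itself relies on for this theorem (it cites \cite{be-bl, be-bl-me, be-me} for exactly that step), so the real content of your proposal is the first paragraph --- and that is where the argument breaks down. The claimed reduction ``strongly pseudoconvex $+$ $A_1,\ldots,A_d$ linearly independent $\Rightarrow M$ is $\mathfrak{D}$-nondegenerate'' is false, for two reasons. First, with $\sum_j b_jA_j=I$, invertibility of $\Ree(\transp\overline{D_0}D_0)$ is equivalent to injectivity of the \emph{real}-linear map $\R^d\ni W\mapsto D_0W\in\C^n$, i.e.\ to $\R$-linear independence of the columns $A_1V,\ldots,A_dV$; this is not the complex rank-$d$ condition of Lemma \ref{nil0}, and Lemma \ref{nil0} itself equates that rank condition with ``finite type with Segre number $2$,'' which is \emph{not} implied by linear independence of the matrices $A_j$ (finite type with H\"ormander number $2$) --- these are different nondegeneracy conditions. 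Second, and decisively, a dimension count kills the reduction: $\mathfrak{D}$-nondegeneracy forces $d\le 2n$ (injectivity of a real-linear map $\R^d\to\C^n\cong\R^{2n}$), and the rank condition of Lemma \ref{nil0} even forces $d\le n$, whereas linear independence of Hermitian matrices allows any $d\le n^2$. Concretely, for $n=3$, $d=7$, take $A_1=I$ and $A_2,\ldots,A_7$ completing it to a linearly independent family: the quadric is strongly pseudoconvex and Levi generating, yet for every $V$ there is a nonzero $\lambda\in\R^7$ with $\left(\sum_j\lambda_jA_j\right)V=0$, so $M$ is not $\mathfrak{D}$-nondegenerate, there is no nondefective disc with $a=0$, and Theorem \ref{propder} cannot be invoked at $(0,V)$.

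This gap is precisely why the paper (following Tumanov) works with stationary minimality (Definition \ref{defstatmin}), where the $A_j$ are restricted to the full orbit space $\mathcal{O}_{X,V}={\rm span}_\R\{V,XV,X^2V,\ldots\}$ with $a\neq 0$, a much weaker requirement than $\mathfrak{D}$-nondegeneracy (which is the special case $X=0$). The paper's route to Theorem \ref{strictly} is: Theorem \ref{cru} (Tumanov's Theorem 5.3 in \cite{tu3}, reproved in Section 2), which produces \emph{some} small $a\in\R^d$ and $V\in\C^n$ at which $M$ is stationary minimal; then Theorem \ref{propder}, which upgrades stationary minimality to the statement that the $1$-jet map $\mathfrak j_{1}$ is a local diffeomorphism at $(a,V)$; then the corollary following Theorem \ref{propder}; and finally the stationary disc machinery you describe. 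To repair your proof, delete the first paragraph, invoke Theorem \ref{cru} instead, and run your jet-transfer, perturbation, and filling arguments at the resulting pair $(a,V)$ with $a\neq 0$ rather than at $a=0$.
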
 
In \cite{be-me}, we  proved that  $2$-jet determination holds under the $\mathfrak{D}$-nondegeneracy of the submanifold (see also \cite{be-bl-me}). More precisely,
\begin{theo}\cite{be-me}
Let $M \subset \C^N$ be a  $\mathcal{C}^4$ generic  real submanifold given by \eqref{eqred0}. Assume that $M$ is {\it $\mathfrak{D}$-nondegenerate} at $0$.  Then any germ at $0$ of CR automorphism  of $M$ of class $\mathcal{C}^3$  is uniquely determined by its $2$-jet at $0.$
\end{theo}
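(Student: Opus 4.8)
The plan is to reduce the $2$-jet determination of $F$ to the unique $1$-jet determination of lifts of stationary discs, which is exactly what the $\mathfrak{D}$-nondegeneracy hypothesis supplies. First I would recall from \cite{be-me2} the explicit family of stationary discs attached to the model quadric $M_H$, together with their meromorphic lifts into the conormal bundle, and then transfer this family to $M$ by an implicit function theorem argument (the $O(3)$ terms in \eqref{eqred0} being a small perturbation of $M_H$). Writing a lift as $\widetilde f=(f,g)$ with $f(\partial\Delta)\subset M$ and $g(\zeta)$ valued in the conormal space $N^*_{f(\zeta)}M$, the content of $\mathfrak{D}$-nondegeneracy---through the invertibility of $\Re e(\transp\overline{D_0}(\sum_{j=1}^d b_jA_j)^{-1}D_0)$---is precisely that such a lift is rigid: it is uniquely determined by its $1$-jet at the boundary point $1\in\partial\Delta$.

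Second, I would establish the biholomorphic invariance of this family. Since the stationarity condition is expressed through the conormal bundle, a germ of CR automorphism $F$ of $M$ of class $\mathcal{C}^3$ sends stationary discs to stationary discs, and its natural lift $\widetilde F$ to the conormal bundle sends lifts to lifts; thus $\widetilde F\circ\widetilde f$ is again the lift of a stationary disc attached to $M$.

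The crux is then to show that, for two CR automorphisms $F$ and $F'$ sharing the same $2$-jet at $0$, the image lifts $\widetilde F\circ\widetilde f$ and $\widetilde F'\circ\widetilde f$ have the same $1$-jet at $1\in\partial\Delta$, for every disc $f$ of the family passing through $0$. The point is that the conormal component $g$ of $\widetilde f$ encodes a first-order datum, so that the $1$-jet at $1$ of the transformed lift depends on the automorphism only through its $2$-jet, and no higher. Combined with the rigidity noted above, the two image lifts then coincide, whence $F\circ f=F'\circ f$. Finally, as the parameters of the stationary discs vary, $\mathfrak{D}$-nondegeneracy guarantees---through the rank condition furnished by Lemma~\ref{nil0}---that the boundaries of these discs sweep out a full neighborhood of $0$ in $M$; since $F\circ f=F'\circ f$ on each of them, $F$ and $F'$ agree on this neighborhood, which is the assertion.

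The main obstacle I anticipate is the bookkeeping in this crux step: making rigorous the claim that the $2$-jet of the automorphism, and nothing higher, governs the $1$-jet of the transformed lift. This demands a careful analysis of how $F$ acts on the conormal component $g$---where one order of differentiation of $F$ is consumed---and is precisely the place where the regularity balance (a $\mathcal{C}^3$ automorphism of a $\mathcal{C}^4$ submanifold) and the explicit form of the stationarity equations for $M_H$ come into play.
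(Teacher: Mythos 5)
The paper itself does not prove this statement (it is quoted from \cite{be-me}); your outline does follow the same stationary-disc architecture as that cited proof: explicit discs and lifts for $M_H$, perturbation to $M$, the conormal lift of the automorphism, $1$-jet rigidity of lifts, and a filling argument. Within that architecture, however, your final step rests on a false implication. You claim that $\mathfrak{D}$-nondegeneracy guarantees the filling ``through the rank condition furnished by Lemma~\ref{nil0}''. In fact $\mathfrak{D}$-nondegeneracy does \emph{not} imply the existence of $V$ with $\mathrm{rank}\,D_0=d$: in $\C^{2+3}$ take
\[
A_1=\begin{pmatrix}0&1\\ 1&0\end{pmatrix},\qquad A_2=\begin{pmatrix}0&-i\\ i&0\end{pmatrix},\qquad A_3=I_2,\qquad b=(0,0,1),\qquad V=\transp(1,0).
\]
The columns of $D_0$ are $\transp(0,1),\transp(0,i),\transp(1,0)$, so $\Re e\,(\transp\overline{D_0}(\sum_j b_jA_j)^{-1}D_0)=I_3$ is invertible and this strongly pseudoconvex, Levi generating quadric is $\mathfrak{D}$-nondegenerate; yet $\mathrm{rank}_{\,\C}D_0\le 2<3=d$ for \emph{every} $V$ (here $n<d$), so the condition of Lemma~\ref{nil0} fails identically. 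The two notions are genuinely different: Lemma~\ref{nil0} characterizes Segre number $2$, i.e., the hypothesis of Theorems~\ref{egypte} and~\ref{nil1}, not Definition~\ref{definondeg}. The filling you want is nevertheless true under $\mathfrak{D}$-nondegeneracy, but it must be established by a direct rank computation on the explicit family \eqref{eqform} (this is what the cited papers do): differentiating the boundary values in the purely imaginary directions of $a$, the map onto the transversal $\Im m\, w$ directions is surjective precisely because the matrix $\Re e\,(\transp\overline{D_0}(\sum_j b_jA_j)^{-1}D_0)$ of Definition~\ref{definondeg} is invertible. That computation cannot be delegated to Lemma~\ref{nil0}.

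Two further points are glossed over but are substantive in the cited proof. First, your lift $\widetilde F$ is made to act on holomorphic discs whose interiors leave $M$, while $F$ is only a $\mathcal{C}^3$ CR automorphism of $M$; to make $\widetilde F\circ\bm{f}$ meaningful one needs $F$ to extend holomorphically to a wedge with edge $M$ (Tumanov's extension theorem, available because $\mathfrak{D}$-nondegeneracy forces the $A_j$ to be $\R$-linearly independent, hence $M$ minimal), and the $\mathcal{C}^3$/$\mathcal{C}^4$ regularity balance you mention enters exactly there, not in the model computation. Second, to conclude $\widetilde F\circ\bm{f}=\widetilde F'\circ\bm{f}$ from equality of $1$-jets at $\zeta=1$, these image lifts must lie in the neighborhood of the reference family on which the $1$-jet map is known to be injective; rigidity is a \emph{local} statement about the parametrized family, not a property of a single lift among all lifts. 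In \cite{be-me} this is arranged by passing to $F'^{-1}\circ F$, which has the $2$-jet of the identity, and by working with sufficiently small discs, so that the image lift stays in the domain of injectivity. As written, your crux step applies rigidity to lifts that you have not placed inside that domain.
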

 In light of Lemma \ref{nil0}, we may rewrite  the following theorem due to Baouendi, Ebenfelt and Rothschild   (Theorem 12.3.11 and Remark 12.3.13 in \cite{BER}) for the model quadric $M_H$.
\begin{theo}\label{egypte}\cite{BER}
Let $M_H \subset \C^N$ be a model  quadric   given by  \eqref{eqred1}. Assume  that $M_H$ is Levi nondegenerate at $0$
and that there exists $V\in \C^n$ such that  the  $n \times d$ matrix  $D_0$ whose $j^{th}$ column is
 $A_jV$ is of rank $d.$ Then any  biholomorphism sending $M_H$ into itself is uniquely determined by its $2$-jet at $0.$
\end{theo}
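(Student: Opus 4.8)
The plan is to derive the statement from the reflection identity on the complexification of $M_H$, after rephrasing the two hypotheses as the invariants that govern that method. I would first reduce to a rigidity statement. Since the $0$-jet of a map is its value at $0$, two germs of biholomorphisms preserving $M_H$ with the same $2$-jet at $0$ share the fixed point $0$; composing one with the inverse of the other, it suffices to prove that a germ $H=(f,g)$ of biholomorphism with $H(M_H)\subseteq M_H$, $H(0)=0$ and $j^2_0 H=j^2_0(\mathrm{id})$ is the identity.

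Next I would translate the hypotheses. Writing the defining functions as $\rho_\mu=\frac12(w_\mu+\bar w_\mu)-\transp\bar z A_\mu z$, the holomorphic differentials as $\rho_{\mu,Z}=(\partial_z\rho_\mu,\partial_w\rho_\mu)$, and the CR fields as $\bar L_k=\partial_{\bar z_k}+2\sum_{\mu}(A_\mu z)_k\,\partial_{\bar w_\mu}$, at the origin the $\rho_{\mu,Z}(0)$ span the $w$-directions, while the $\bar L_k\rho_{\mu,Z}(0)$ span the subspace of $z$-directions generated by the rows of the $A_\mu$. Hence $M_H$ is $1$-nondegenerate at $0$ exactly when $\cap_{\mu}\mathrm{Ker}\,A_\mu=\{0\}$, i.e.\ under the Levi nondegeneracy assumption. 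On the other hand, by Lemma \ref{nil0} combined with Theorem 10.5.5 of \cite{BER}, the rank-$d$ condition on $D_0$ is precisely finite type with Segre number $2$, so the iterated Segre map of order $2$ is generically of maximal rank and its image fills a neighborhood of $0$.

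I would then complexify. With $\mathcal M=\{(Z,\zeta):\ w_\mu+\sigma_\mu=2\,\transp\chi A_\mu z,\ 1\le\mu\le d\}$, where $\zeta=(\chi,\sigma)$, the inclusion $H(M_H)\subseteq M_H$ complexifies to the basic identity $g_\mu(Z)+\bar g_\mu(\zeta)=2\,\transp\bar f(\zeta)A_\mu f(Z)$ on $\mathcal M$, with $\bar H=(\bar f,\bar g)$ the conjugate map. Restricting to the first Segre variety $Q_0=\{w=0\}$ and using $H(0)=0$ gives $g|_{Q_0}=0$. Differentiating the identity in the $\chi$-directions tangent to $\mathcal M$ and using $1$-nondegeneracy sets up the standard linear system expressing the Taylor coefficients of $H$ along the Segre varieties in terms of the $1$-jet of $H$ and lower-order data; propagating this along the chain of Segre varieties and invoking finite type $2$, so that two Segre maps already reach an open set, upgrades the determination to a full neighborhood of $0$. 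Since $j^2_0 H=j^2_0(\mathrm{id})$, this forces $H=\mathrm{id}$.

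The main obstacle is the sharp accounting of the jet order: one must verify that reaching a neighborhood costs exactly two orders of jet and no more, which is where it is essential that the Segre number is exactly $2$ and the nondegeneracy order exactly $1$; the same counting recovers the classical Chern--Moser bound in the hypersurface case $d=1$. A secondary technical point is to justify the constrained differentiation of the basic identity along $\mathcal M$ --- where $w_\mu=2\transp\chi A_\mu z-\sigma_\mu$ --- and to check that $1$-nondegeneracy indeed makes the resulting linear systems solvable, the relevant coefficient vectors $\transp(\partial_{\chi_k}\bar f(0))A_\mu$ spanning $(\C^n)^\ast$ because $f_z(0)$ is invertible and the rows of the $A_\mu$ span $\C^n$.
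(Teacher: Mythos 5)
Your proposal follows, in substance, the same route as the paper: the paper's entire proof of this theorem consists of the two translations you make --- Levi nondegeneracy of a quadric is exactly $1$-nondegeneracy at $0$, and, by Lemma \ref{nil0}, the rank-$d$ condition on $D_0$ is exactly finite type with Segre number $2$ --- followed by a citation of Proposition 11.1.12 (together with Theorem 12.3.11 and Remark 12.3.13) of \cite{BER}, which is precisely the Segre-variety jet-parametrization machinery you set out to reconstruct. The difference is that the paper invokes that machinery as a black box, while you attempt to rerun it; and it is in that attempt that your argument has a genuine gap.

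The gap is the propagation step along the Segre chain, which you yourself flag as ``the main obstacle'' and then leave unresolved. The first step is fine: parametrizing the complexification $\mathcal M$ by $(Z,\chi)$ (so that $\sigma_\mu=2\transp\chi A_\mu z-w_\mu$) and differentiating the basic identity in $\chi_k$ with $Z$ held fixed, one determines $f$ on $Q_0=\{w=0\}$ from $j^1_0 H$, since the coefficient covectors $\transp{\bigl(\overline{f_{z_k}(0)}\bigr)}A_\mu$ span by $1$-nondegeneracy. (Note that the differentiation must be done holding $Z$ fixed; if instead one holds $(z,\sigma)$ fixed, the unknown transverse derivatives $\partial_{w_\nu}g_\mu$ along $Q_0$ enter the system, so ``the standard linear system in terms of the $1$-jet and lower-order data'' is not automatic.) The real difficulty is the second step: to determine $H$ on the second Segre set $S_2$, which is what fills an open set when the Segre number is $2$, you need the full $1$-jet of $H$ at points of $Q_0$ --- including the derivatives $\partial_w f$, $\partial_w g$ transverse to $Q_0$ --- and showing that this $1$-jet along $Q_0$ is determined by $j^2_0 H$ (this is exactly where the second jet order is consumed) requires differentiating the already differentiated identity again and closing an induction on jets along Segre sets, together with the genericity/analyticity argument needed because the Segre map of order $2$ has only generic maximal rank. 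That induction is the content of the proposition the paper cites; your text asserts its conclusion (``reaching a neighborhood costs exactly two orders of jet'') without proving it. So, as written, your proposal is a correct outline of the proof of the cited result rather than a proof: either carry out the jet-parametrization induction sketched above, or do what the paper does and quote Proposition 11.1.12 of \cite{BER}, at which point your reduction plus Lemma \ref{nil0} already completes the argument.
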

\begin{proof}
This is a direct application of Lemma \ref{nil0} and Proposition 11.1.12 in \cite{BER}.
\end{proof}  
Using  Theorem \ref{egypte},  we then obtain the following new result  which is a direct application of  Theorem 3.10 in \cite{bl-me1} for   smooth  generic submanifolds:
\begin{theo}\label{nil1}
Let $M \subset \C^N$ be a  smooth  submanifold given by  \eqref{eqred0}. Assume that $M$ is Levi nondegenerate at $0$ and that  the matrix  $D_0$ whose $j^{th}$ column is
 $A_jV$ is of rank $d.$ Then any  biholomorphism sending $M$ into itself is uniquely determined by its $2$-jet at $0.$
\end{theo}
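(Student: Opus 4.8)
The plan is to deduce the $2$-jet determination for $M$ from that of its model quadric $M_H$, using the transfer result Theorem 3.10 in \cite{bl-me1}. The key observation is that the two hypotheses of Theorem \ref{nil1} --- Levi nondegeneracy at $0$ and the existence of $V\in\C^n$ for which the $n\times d$ matrix $D_0$ with $j^{th}$ column $A_jV$ has rank $d$ --- involve only the Hermitian matrices $A_1,\ldots,A_d$. Since these matrices are precisely the data defining the model quadric $M_H$ in \eqref{eqred1}, the hypotheses are left unchanged when $M$ is replaced by $M_H$; in particular they are exactly the assumptions of Theorem \ref{egypte}.

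First I would apply Theorem \ref{egypte} to the model quadric $M_H$. This requires only the Levi nondegeneracy of $M_H$ together with the rank-$d$ condition on $D_0$, both of which hold by assumption; via Lemma \ref{nil0}, the rank condition is equivalent to $M_H$ being of finite type with Segre number $2$ at $0$. The conclusion is that every biholomorphism sending $M_H$ into itself is uniquely determined by its $2$-jet at $0$.

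Next I would feed this into the transfer principle of \cite{bl-me1}. Theorem 3.10 there asserts, for a smooth generic submanifold $M$, that if the biholomorphisms preserving its model quadric $M_H$ are determined by their $2$-jet at $0$, then the same unique determination holds for $M$ itself. Applying this with the output of the previous step yields the desired $2$-jet determination for $M$, so that the argument reduces to a direct chaining of Theorem \ref{egypte} with Theorem 3.10 of \cite{bl-me1}.

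The main obstacle is not a computation but the verification that the hypotheses of the transfer theorem are genuinely met. The result of \cite{bl-me1} is formulated under finite type and nondegeneracy conditions on the model, and the role of Lemma \ref{nil0} is to certify that the rank-$d$ condition on $D_0$ is exactly the finite type (Segre number $2$) hypothesis required, while Levi nondegeneracy supplies the remaining nondegeneracy. Once this alignment of hypotheses is confirmed, no further work is needed and the conclusion follows at once.
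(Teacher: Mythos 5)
Your overall route --- chaining Theorem \ref{egypte} with Theorem 3.10 of \cite{bl-me1} --- is exactly the paper's, and the first step (applying Theorem \ref{egypte} to $M_H$, with Lemma \ref{nil0} certifying that the rank-$d$ condition on $D_0$ is the finite type/Segre number $2$ hypothesis) is fine. The problem is your statement of Theorem 3.10, which sits precisely at the point you yourself flag as the main obstacle. That theorem is not a transfer principle whose input is $2$-jet determination of the \emph{biholomorphisms} preserving $M_H$: its hypothesis bears on the Lie algebra $hol(M_H,0)$ of germs of real-analytic \emph{infinitesimal} CR automorphisms of the model quadric --- one must know that the elements of $hol(M_H,0)$ are determined by their $2$-jets at $0$. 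This is why the paper's proof introduces $hol(M_H,0)$ at all before invoking \cite{bl-me1}. As written, your chaining cites the theorem under a hypothesis it does not have, so an intermediate step is missing.

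The missing step is standard and short: if $X \in hol(M_H,0)$ has vanishing $2$-jet at $0$, then $X(0)=0$ and its local flow $\varphi_t$ is a one-parameter family of germs of biholomorphisms sending $M_H$ into itself whose $2$-jets at $0$ coincide with that of the identity; Theorem \ref{egypte} then forces each $\varphi_t$ to be the identity, hence $X=0$. By linearity of $hol(M_H,0)$ this is exactly the $2$-jet determination of its elements, and Theorem 3.10 of \cite{bl-me1} then applies to give the conclusion for $M$. With this bridge inserted your argument coincides with the paper's proof; without it, the ``alignment of hypotheses'' you defer to is precisely what remains unverified.
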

\begin{proof}
 Let  $hol(M_H,0)$  be the set of  germs of real-analytic 
infinitesimal CR automorphisms of $M_H$ at $0.$ By Theorem \ref{egypte}, the elements of $hol(M_H,0)$ are  determined by their $2$-jets.
Hence, a direct application of Theorem 3.10 in \cite{bl-me1} yields to the result. 
\end{proof}
 
\begin{remark} The second author recently wrote a paper with  Kol\'a\v r \cite{ko-me} in which they observe that it is enough to assume that $M$ is of class $\mathcal{C}^3$ in the previous theorem. 
\end{remark}
   
\section{Stationary minimal submanifolds}
Let $M$ be a 
   strongly Levi nondegenerate real submanifold given by \eqref{eqred0}. We fix $b\in \R^d$   such that $\sum_{j=1}^d b_jA_j$  is invertible, and let $a \in \C^d$ be sufficiently small.  We  consider the  following  quadratic matrix equation 
\begin{equation}\label{confi00}
PX^2 + AX +   \transp{\overline {P}}=0
\end{equation} 
where $P:=\sum_{j=1}^d {{a}_j} A_j $ and   $A:=\sum_{j=1}^d (b_j -a_j-\overline{a_j})A_j.$  Consider  $X$ to be the unique $n\times n$ matrix solution 
  of \eqref{confi00} such that $\|X\|<1.$  Note that $X$ depends on both $a$ and $b$. The following definition, inspired by the work of Tumanov in the strongly pseudoconvex setting \cite{tu}, was introduced in \cite{be-me2}.
 \begin{defi}\label{defstatmin}
 Let $M$ be a 
   strongly Levi nondegenerate (at $0$) real submanifold given by \eqref{eqred0}. Let $b\in \R^d$ be such that $\sum_{j=1}^d b_jA_j$ is invertible and let  $V\in \C^n$. Consider $a \in \C^d$ sufficiently small and the  solution $X$  of \eqref{confi00} with $\|X\|<1$. We say that $M$ is {\it stationary minimal at $0$ for $(a,b -a-\overline{a},V)$} if the matrices 
 $A_1,\ldots,A_d$ restricted to the orbit space
 $$\mathcal{O}_{X, V}:={\rm span}_\R \{V, XV, {X^2}V, \ldots, {X^k}V, \ldots\}$$ are 
 $\R$-linearly independent. 
 \end{defi}
Note that the above definition is independent of the choice of holomorphic coordinates and that
  $M$ is stationary minimal for $(0,b,V)$ in case it is $\mathfrak{D}$-nondegenerate.
Tumanov proved that if  $A_1,\ldots,A_d$ are $d$ linearly independent Hermitian matrices, then there exist $a\in \Bbb R^d $ and $V \in \C^n$ such that  the matrices 
 $A_1,\ldots,A_d$ restricted to the  space
$${\rm span}_\R \{V, PV, {P^2}V, \ldots, {P^k}V, \ldots\}$$ are 
 $\R$-linearly independent (Theorem 5.3 in \cite{tu3}). In particular, this implies that strongly pseudoconvex Levi generating submanifolds are stationary minimal. 
 For the sake of clarity, we   provide a proof. 
\begin{theo} \label{cru}\cite{tu3}
Let $M \subset \C^N$ be a  $\mathcal{C}^4$ generic  real submanifold  given by \eqref{eqred0}.  Assume that $M$ is strongly pseudoconvex and that the matrices 
$A_1,\ldots,A_d$ are linearly independent. Then  there exists  $a,b\in \Bbb R^d$ and $V \in \C^n$ 
 such that $M$ is stationary minimal at $0$ for $(a,b -2a,V).$ 
\end{theo}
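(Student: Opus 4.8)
The plan is to combine Tumanov's orbit result (Theorem 5.3 in \cite{tu3}) with an explicit solution of \eqref{confi00}, after normalizing the quadric. First I would use strong pseudoconvexity to choose $b\in\R^d$ with $\sum_{j=1}^d b_jA_j$ positive definite, and then, by a linear holomorphic change of coordinates, arrange $\sum_{j=1}^d b_jA_j=I$. Since stationary minimality does not depend on the choice of holomorphic coordinates, it suffices to produce $a,V$ in these coordinates. With $a\in\R^d$ the matrix $P=\sum_j a_jA_j$ is Hermitian, so $\transp{\overline{P}}=P$, and $A=\sum_j(b_j-2a_j)A_j=I-2P$; hence \eqref{confi00} reads $PX^2+(I-2P)X+P=0$.

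Next I would invoke Tumanov's result: since $A_1,\dots,A_d$ are linearly independent Hermitian matrices, there exist $a_0\in\R^d$ and $V\in\C^n$ such that $A_1,\dots,A_d$ restricted to $W:={\rm span}_\R\{V,P_0V,P_0^2V,\dots\}$ are $\R$-linearly independent, where $P_0=\sum_j (a_0)_jA_j$. The key elementary observation is a scaling invariance: for any $t\neq 0$, replacing $a_0$ by $ta_0$ replaces $P_0$ by $tP_0$ and multiplies the generator $P_0^kV$ by $t^k\neq 0$, leaving the real span $W$ unchanged. Thus I may take $a=ta_0$ with $t>0$ as small as I wish, so that $\|P\|$ is small while the $\R$-linear independence on $W$ persists.

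Then I would solve \eqref{confi00} through $P$. Let $g$ be the branch of the scalar equation $p\,x^2+(1-2p)x+p=0$ analytic near $p=0$; a direct computation gives $g(p)=\big((2p-1)+\sqrt{1-4p}\big)/(2p)=-p+O(p^2)$, a power series with real coefficients satisfying $g(0)=0$ and $g'(0)=-1$. Since $P$ is Hermitian, functional calculus shows that $g(P)$ solves the matrix equation and that $\|g(P)\|=\max_j|g(\lambda_j)|<1$ for $\|P\|$ small; by the uniqueness of the solution with norm $<1$ we get $X=g(P)$. In particular $X=q(P)$ for a real polynomial $q$ with $q(0)=0$, and—because $g$ is strictly monotone near $0$, hence injective on the real spectrum of $P$—conversely $P=r(X)$ for a real polynomial $r$.

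Finally I would compare the two orbit spaces. From $X=q(P)$ each $X^kV$ is a real polynomial in $P$ applied to $V$, so $\mathcal{O}_{X,V}\subseteq W$; from $P=r(X)$ each $P^jV$ lies in $\mathcal{O}_{X,V}$ (which already contains $V=X^0V$), so $W\subseteq\mathcal{O}_{X,V}$. Hence $\mathcal{O}_{X,V}=W$, and the $\R$-linear independence of $A_1,\dots,A_d$ on $W$ transfers verbatim to $\mathcal{O}_{X,V}$, which is exactly stationary minimality for $(a,b-2a,V)$. I expect the main obstacle to be this middle step: showing that the relevant solution $X$ is genuinely a real function of $P$ alone. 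This is precisely where the normalization $\sum_j b_jA_j=I$ coming from strong pseudoconvexity is essential, since for a merely invertible $\sum_j b_jA_j=B$ one only obtains $X\approx -B^{-1}P$ and the two orbit spaces no longer coincide.
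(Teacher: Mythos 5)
Your proof is correct, and while it opens exactly as the paper's does, its second half takes a genuinely different route. Both arguments start the same way: normalize $\sum_{j}b_jA_j=I$ by a linear change of coordinates (licit since stationary minimality is coordinate-invariant) and invoke Theorem 5.3 of \cite{tu3} to obtain $a_0\in\R^d$ and $V$ with $A_1,\ldots,A_d$ independent on the $P_0$-orbit $W$. From there the paper argues perturbatively: using only $X=-\transp{\overline{P}}+O(|a_0|^2)$, it extracts from Tumanov's independence an invertible $d\times d$ matrix $C_{hom}$ built from components of the vectors $A_jP^kV$, compares it with the corresponding matrix $C$ built from $A_jX^kV$, observes that $\det C$ has $\pm\det C_{hom}$ as its lowest-order homogeneous part, and then rescales $a_0\mapsto\lambda a_0$ to get a one-variable function $h(\lambda)\not\equiv 0$, so that stationary minimality holds at $a=\lambda_0a_0$ for some suitable $\lambda_0$. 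You instead prove the \emph{exact} identity $X=g(P)$ by functional calculus for the Hermitian matrix $P$ (this is where the normalization and the uniqueness of the small-norm solution of \eqref{confi00} enter), note $g'(0)=-1\neq 0$, and conclude via interpolation on the (real, small) spectrum that $X$ and $P$ are real polynomials in each other, whence $\mathcal{O}_{X,V}=W$ exactly; your scaling remark ($W$ is unchanged under $a_0\mapsto ta_0$) replaces the paper's determinant/degree count and is only needed to make $a$ small. What your approach buys is a cleaner and slightly stronger conclusion: stationary minimality holds for \emph{every} sufficiently small $t>0$, with no choice of rows or determinant expansion. What the paper's perturbative structure buys is flexibility: it uses only the leading-order behavior of $X$, the kind of information that survives when an exact identity like $X=g(P)$ is unavailable --- as you yourself point out, that identity is tied to the strongly pseudoconvex normalization $\sum_j b_jA_j=I$ and to $a$ being real. (One cosmetic point: your claim that the interpolating polynomial satisfies $q(0)=0$ is neither automatic nor needed, since $V\in W$ anyway; it can in any case be arranged by adding $0$ as an interpolation node.)
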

\begin{proof}  Let $b\in \R^d$ be  such that $\sum_{j=1}^d b_jA_j$  is positive definite; without loss of generality, we  assume that $\sum_{j=1}^d b_jA_j$ is the identity matrix. 
According to  Theorem  5.3 in \cite{tu3}, there exists $a_0 \in \Bbb R^d$  and $V \in \Bbb C^n$ such that the  matrices $A_1,\ldots,A_d$ are  $\R$-linearly independent on the space 
\begin{equation*}
\mathcal{O}_{(a_0, V)}:={\rm span}_\R \{V, PV, {{P}^2}V, \dots, {{P}^{s}}V, \dots \}, 
\end{equation*}
 with $P=\sum {a_0}_jA_j$. Note that since $a_0$ can be chosen as small as necessary, the matrix $A:=\sum_{j=1}^d (b_j -2{a_0}_j)A_j$ is positive definite.  
Taking $s$ large enough,  we assume that 
 $$\mathcal{O}_{(a_0, V)}={\rm span}_\R \{V, PV, P^2V, \dots, P^{s}V \}.$$
We set 
 $$\tilde V=(V, PV, P^2V, \dots, P^{s}V) \in \C^{(s+1)n},$$ 
and we define  
$\tilde {A_j}, j=1\ldots,d,$  and $\tilde {P}$ to  be  the following $(s+1)n \times (s+1)n$ matrices
$$
\tilde {A_j}:=\begin{pmatrix}
	A_j& & & (0) \\ &A_j & & \\ & & \ddots & \\ (0)& & &A_j
\end{pmatrix}, \ \ 
\tilde {P}:=\begin{pmatrix}
	P & & & (0) \\ &P & & \\ & & \ddots & \\ (0)& & &P
\end{pmatrix}.
$$
It follows that  the vectors 
 $\tilde {A}_1{\tilde V},  \dots, \tilde {A}_d{\tilde V}$
 are $\R$-linearly independent.  We may then find an invertible  $d\times d$ matrix  $C_{hom}$ whose  $j^{th}$ column is  given by well chosen  (independent of $j$) components of  $\tilde {A}_j{\tilde V}$. Let  $X$ be the matrix solution of \eqref{confi00} with $\|X\|<1$. We replace ${\tilde V}$ by $\tilde V_X,$  where  $$\tilde V_X=(V, XV, {X}^2V, \dots, {X}^{s}V) \in \C^{(s+1)n},$$  with  $X$ evaluated at $a_0,$
 and we consider the $d\times d$ matrix $C$ obtained from 
 $\tilde {A}_1{\tilde V_X},  \dots, \tilde {A}_d{\tilde V_X}$ by keeping the same rows as the ones of $C_{hom}$.   Since $X$ is of the form
$$X=-\left(I-2\sum_{j=1}^d {a_0}_jA_j\right)^{-1} \transp{\overline {P}} + O (|a_0|^3)=- \transp{\overline {P}} + O (|a_0|^2),$$  
it follows that $C$ has a determinant with a first  nonzero  homogeneous term of degree $d$ depending on  $a_0,$ actually the determinant of  $C_{hom}.$ Therefore, expanding the determinant of $C$ into homogeneous terms, and replacing $a_0$ by $\lambda a_0,$ we obtain a function $h(\lambda)$  with respect to $\lambda$    whose first term coefficient  of order $d$ is non zero. Therefore,  we  may choose $\lambda_0$ such that $h(\lambda_0)\ne 0. $ This shows that $M$ is stationary minimal for $a=\lambda_0 a_0.$

\end{proof}

 \section{Stationary discs}

Let $M\subset \C^N$ be a $\mathcal{C}^{4}$ generic  real submanifold of codimension $d$ given by  (\ref{eqred0}). Following Lempert \cite{le} and Tumanov \cite{tu},  a holomorphic disc $f: \Delta \to \C^N$ continuous up to  $\partial \Delta$ and such that $f(\partial \Delta) \subset M$ is {\it stationary for $M$} if there 
exist  $d$ real valued functions $c_1, \ldots, c_d : \partial \Delta \to \R$ such that $\sum_{j=1}^dc_j(\zeta)\partial r_j(0)\neq 0$ for all $\zeta \in \partial \Delta$   and such that the map denoted by $\tilde{f}$
\begin{equation*}
\zeta \mapsto \zeta \sum_{j=1}^dc_j(\zeta)\partial r_j\left(f(\zeta), \overline{f(\zeta)}\right)
\end{equation*}
defined on $\partial \Delta$ extends holomorphically on $\Delta$. In that case, the disc $\bm{f}=(f,\tilde{f})$ is a holomorphic lift of $f$ to the cotangent bundle $T^*\C^{N}$ and the set of all such lifts $\bm{f}=(f,\tilde{f})$, with $f$ nonconstant, is denoted by $\mathcal{S}(M)$. 

We now consider a strongly Levi nondegenerate quadric $M_H$ given by \eqref{eqred1}. We fix $b \in  \Bbb R^d$  such that $\sum_{j=1}^d {b}_jA_j$ is invertible and
 we define $\mathcal{S}_0(M_H) \subset \mathcal{S}(M_H)$ to be the subset of lifts whose value at $\zeta=1$ is $(0,0,0,b/2)$.  
 Consider an initial  disc $\bm {f_0} \in \mathcal{S}_0(M_H)$ given by
 \begin{equation*}
 \bm {f_0}=\left((1-\zeta)V_0,2(1-\zeta)^t \overline{V_0}A_1V_0,\ldots,2(1-\zeta)^t \overline{V}A_dV_0,(1-\zeta) ^t \overline{V_0}(\sum {b}_jA_j), \frac{\zeta}{2} b\right),
 \end{equation*} 
with $V_0 \in\C^n$. In \cite{be-me2}, we obtained the explicit expression of lifts of stationary discs near $\bm {f_0}$. Actually, that result is due to Tumanov \cite{tu} when the quadric $M_H$ is strongly pseudoconvex. More precisely lifts of stationary discs $\bm{f}=(h,g,\tilde{h},\tilde{g}) \in \mathcal{S}_0(M_H)$   near  $\bm {f_0}$ are of the form
\begin{equation}\label{eqform}
\begin{cases}
h(\zeta)= V-\zeta(I-\zeta X)^{-1}(I-X)V \\
\\
g_j(\zeta)={^t\overline{V}}A_j V-2^t \overline {V}A_j \zeta (I-\zeta X)^{-1}(I-X)V +^t \overline {V}(^t\overline {X}K_j-K_jX))V +\\
\hspace{1.4cm}^t \overline {V}(I-^t \overline {X})K_j(I + 2\zeta X(I-\zeta X)^{-1})(I-X) V, \\
\\
\tilde{h}(\zeta)=-\zeta\transp\overline{h(\zeta)} \left({\sum_{j=1}^d ({a_j}\overline{\zeta}+(b_j-a-\overline a)+\overline {a_j}\zeta)A_j}\right)\\
\\
\displaystyle \tilde{g}(\zeta)=\frac{a+(b-a-\overline a)\zeta+\overline{a}\zeta^2}{2}\\
\end{cases}
\end{equation}
where  $V \in \Bbb C^n$ (close to $V_0$), $a \in \C^d$ is sufficiently small, $X$ is the unique $n\times n$ matrix solution of \eqref{confi00} (with $b- a-\overline{a}$) with $\|X\|<1$, and for $j=1, \dots,d$ 
\begin{equation}\label{eqK_j}
K_j=\sum_{r=0}^\infty {\transp \overline {X}}^r A_j X^r. 
\end{equation}
\vspace{0.5cm}

As a consequence of that explicit expression, it is possible to characterize nondefective discs in $\mathcal{S}_0(M_H)$. 
Recall from \cite{ba-ro-tr} that a stationary disc $f$  is {\it defective} if it admits a lift 
$\bm{f}=(f,\tilde{f}): \Delta \to T^*\C^N$  such that $(f,\tilde{f}/\zeta)$ is holomorphic 
on $\Delta$. As proved in  \cite{be-me2}, a disc $f$ of the form \eqref{eqform}  is nondefective if and only if  the quadric $M_H$ 
is stationary minimal at $0$ for $(a,b-a-\overline{a},h(0))$. Together with Theorem \ref{cru}, we then recover
\begin{theo} \label{theoexist}\cite{tu3}
Let $M \subset \C^N$ be a  $\mathcal{C}^4$ generic  real submanifold  given by \eqref{eqred0}.  Assume that $M$ is strongly pseudoconvex and that the matrices 
$A_1,\ldots,A_d$ are linearly independent.  Then  there exists a nondefective stationary disc $f$ of the form \eqref{eqform}.
\end{theo}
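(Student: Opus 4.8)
The plan is to combine Theorem \ref{cru} with the characterization of nondefective discs recalled immediately before the statement. The two ingredients dovetail almost directly, so the proof reduces to matching the parameters produced by Theorem \ref{cru} to those appearing in the explicit description \eqref{eqform}.

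First I would invoke Theorem \ref{cru}: since $M$ is strongly pseudoconvex and $A_1,\ldots,A_d$ are linearly independent, there exist $a,b\in\R^d$ and $V\in\C^n$ such that $M$ is stationary minimal at $0$ for $(a,b-2a,V)$. The point to exploit here is that $a$ is \emph{real}, so that $\overline{a}=a$ and hence $b-2a=b-a-\overline{a}$; in particular stationary minimality holds for the triple $(a,b-a-\overline{a},V)$, which is the format appearing in the characterization. This reality of $a$ is the only bookkeeping subtlety, as it is what reconciles the quantity $b-2a$ coming out of Theorem \ref{cru} with the quantity $b-a-\overline{a}$ entering the notion of nondefectiveness.

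Next I would write down the lift $\bm{f}=(h,g,\tilde h,\tilde g)$ of the form \eqref{eqform} attached to these specific values of $a$ and $V$ (with the fixed $b$), where $X$ is the unique solution of \eqref{confi00} with $b-a-\overline{a}$ satisfying $\|X\|<1$. Evaluating the first line of \eqref{eqform} at $\zeta=0$ gives $h(0)=V$, so the triple naturally associated with this disc is exactly $(a,b-a-\overline{a},h(0))=(a,b-2a,V)$, i.e.\ precisely the triple for which stationary minimality was secured in the previous step.

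Finally, I would apply the equivalence established in \cite{be-me2}: a disc of the form \eqref{eqform} is nondefective if and only if $M_H$ is stationary minimal at $0$ for $(a,b-a-\overline{a},h(0))$. Since we have arranged exactly this stationary minimality, the disc $f$ is nondefective, which is the assertion. I do not expect any genuine analytic obstacle in this final theorem: the substantive content resides entirely in Theorem \ref{cru} and in the equivalence of \cite{be-me2}, so the remaining task is just the careful alignment of parameters described above.
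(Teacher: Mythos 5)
Your proposal is correct and follows essentially the same route as the paper, which obtains Theorem \ref{theoexist} precisely by combining Theorem \ref{cru} with the characterization from \cite{be-me2} that a disc of the form \eqref{eqform} is nondefective if and only if $M_H$ is stationary minimal at $0$ for $(a,b-a-\overline{a},h(0))$. Your extra bookkeeping (the reality of $a$ giving $b-2a=b-a-\overline{a}$, and $h(0)=V$ from \eqref{eqform}) is exactly the alignment the paper leaves implicit.
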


\section{ The $1$-jet  map for stationary discs of the model quadric}
Let $M_H$ be a model quadric of the form \eqref{eqred1}. 
 Consider the $1$-jet map at $\zeta=1$ defined on $\mathcal{S}_0(M_H)$ 
 $$\mathfrak j_{1}:\bm{f} \mapsto (\bm{f}(1),\bm{f}'(1)).$$ Since $\bm{f}(1)=(0,0,0,b/2)$ where $b \in \R^d$ is fixed, the  $1$-jet map is identified with the derivative map  
 $\bm{f} \mapsto \bm{f}'(1)$ at $\zeta=1$. Using the explicit expression of lifts \eqref{eqform}, we note in \cite{be-me2} that,  after changes of variables in both the source and the target spaces,                                                                        
 the  $1$-jet map   $\mathfrak j_{1}: \C^d \times \C^n \to  \C^n \times \R^d \times \C^d$ at $\zeta=1$ may be written as 
\begin{equation}\label{rikip} 
\mathfrak j_{1}
: (a,V) \mapsto 
(V, \transp\overline{V}(I-\transp\overline {X})K_j(I-X)V, \Im m {a}).
\end{equation} 
We prove in  \cite{be-me2} that  the fact that  $M_H$  is stationary minimal at $0$  is a necessary condition for the $1$-jet map $\mathfrak j_{1}$ to be a local diffeomorphism. In this section, we  give an elementary proof of the 
equivalence  when the   model quadric $M_H$ is strongly pseudoconvex.

  \vspace{0.5cm} 
  
Denote by  $\mathcal{M}_n(\C)$ the space of square matrices of size $n$ with complex coefficients.  Let $P$ and $X$ be  given by \eqref{confi00}. We consider the map 
 \begin{equation*}
 \varphi: \mathcal{M}_n(\C) \to \mathcal{M}_n(\C),\  \ 
\end{equation*}
defined by 
\begin{equation}\label{symph} 
\varphi(N)=N+P(NX+XN).  
\end{equation}
For  $a \in \C^d$  small enough, the map $\varphi$ is  invertible with inverse  of the form
\begin{equation*}
\varphi^{-1}(N)= N +\sum_{r=0}^\infty Q_r(P,X)N X^r,
\end{equation*} 
where   $Q_r$ is a convergent power series in $P$ and $X$; e.g.  $Q_0=\sum_{k \ge 1} (-1)^k(PX)^k$. Moreover $Q_r (P,X)=\sum_{k\ge 1} R_{k,r}(P,X)$ where $R_{k,r}(P,X)$ is a homogeneous polynomial of degree $k$ in $P$ and $k-r$ in $X$.
Note that if $a=0$ then  $\varphi $ is the identity map. In what follows, we denote by $X_{\Re e a_s}$ the derivative $\dfrac{\partial X}{\partial {\Re e a_s}}$, $s=1,\ldots,d$. 
We have the following lemma. 
\begin{lemma}\label{klop1} 
Let $M_H$ be a strongly  pseudoconvex  quadric given by \eqref{eqred1}.  Let $a \in \R^d$ be small enough and let  X be the unique $n\times n$ matrix solution of \eqref{confi00}  such that $\|X\|<1$.  
Then, after a linear choice of coordinates, we have
\begin{equation*}
 X_{ \Re e a_s}= \varphi^{-1}(-A_s (I-X)^2)=-\varphi^{-1}(A_s) (I-X)^2,
\end{equation*}
where  $\varphi$ is given by \eqref{symph}.
\end{lemma}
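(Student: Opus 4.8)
The plan is to obtain the formula by implicit differentiation of the quadratic equation \eqref{confi00}. First I would use strong pseudoconvexity to normalize coordinates so that $\sum_{j=1}^d b_j A_j = I$. Since $a \in \R^d$ is real, $P = \sum_j a_j A_j$ is Hermitian, so $\transp{\overline P} = P$, and $A = \sum_j (b_j - a_j - \overline{a_j})A_j = I - 2P$; thus \eqref{confi00} reads $PX^2 + (I-2P)X + P = 0$, equivalently $X = -P(I-X)^2$. For $a$ small the branch $\|X\|<1$ is the unique one selected by the implicit function theorem applied to \eqref{confi00}, so $X$ depends smoothly on $a$ and each derivative $X_{\Re e a_s}$ is well defined.

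Next I would differentiate \eqref{confi00} with respect to $\Re e a_s$ and evaluate at the real point $a$. Using $\partial_{\Re e a_s} P = A_s$, $\partial_{\Re e a_s} \transp{\overline P} = A_s$ and $\partial_{\Re e a_s} A = -2A_s$, the terms carrying the explicit dependence on $a_s$ collapse, via $X^2 - 2X + I = (I-X)^2$, to the single block $A_s(I-X)^2$, while the terms carrying $X_{\Re e a_s}$ assemble into a Sylvester-type operator applied to $X_{\Re e a_s}$. The key step — and the one I expect to be the main obstacle — is to recognize this operator as exactly $\varphi$ of \eqref{symph}; here the normalization $\sum_j b_j A_j = I$ together with the identity $X = -P(I-X)^2$ coming from \eqref{confi00} are what let the linear terms be reorganized into the form $N \mapsto N + P(NX+XN)$, since the Sylvester piece $P(X_{\Re e a_s}X + X X_{\Re e a_s})$ must be matched against the remaining first-order terms. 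Granting this, one gets $\varphi(X_{\Re e a_s}) = -A_s(I-X)^2$, and since $\varphi$ is invertible for small $a$ with the inverse recalled above, the first equality $X_{\Re e a_s} = \varphi^{-1}(-A_s(I-X)^2)$ follows.

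Finally, for the second equality I would show that $\varphi$ commutes with right multiplication by $(I-X)^2$. Writing $\varphi(N) = N + PNX + PXN$ and right-multiplying by $W := (I-X)^2$, the only term that needs attention is $PNXW$ versus $PNWX$; these agree because $X$ commutes with $W = I - 2X + X^2$. Hence $\varphi(NW) = \varphi(N)W$ for every $N$, and consequently $\varphi^{-1}(MW) = \varphi^{-1}(M)W$ for every $M$. Taking $M = -A_s$ yields $\varphi^{-1}(-A_s(I-X)^2) = -\varphi^{-1}(A_s)(I-X)^2$, which completes the proof. I note that this last commutation argument is robust and uses only that $X$ is a polynomial expression commuting with $(I-X)^2$, so the delicate point of the whole lemma remains the precise identification of the linearization of \eqref{confi00} with $\varphi$.
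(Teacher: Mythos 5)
Your strategy is the same as the paper's (implicit differentiation of \eqref{confi00}, invertibility of $\varphi$, and a commutation argument for the second equality), and your last step is correct: since $X$ commutes with $W=(I-X)^2$, one has $\varphi(NW)=\varphi(N)W$, hence $\varphi^{-1}(MW)=\varphi^{-1}(M)W$; this fills in a detail the paper leaves implicit. However, there is a genuine gap exactly at the step you flag and then ``grant'': you normalize $\sum_{j=1}^d b_jA_j=I$, whereas the identity of the lemma requires normalizing the \emph{middle} coefficient of \eqref{confi00}, namely $A=\sum_{j=1}^d(b_j-a_j-\overline{a_j})A_j=I$, which is what the paper does. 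With your normalization, differentiating $PX^2+(I-2P)X+P=0$ in $\Re e a_s$ yields
\[
(I-2P)X_{\Re e a_s}+P\left(X_{\Re e a_s}X+XX_{\Re e a_s}\right)=-A_s(I-X)^2,
\]
so the operator acting on $X_{\Re e a_s}$ is $\psi(N)=\varphi(N)-2PN$, not $\varphi(N)$, and this discrepancy cannot be reorganized away: expanding in powers of $a$ gives $X_{\Re e a_s}=-A_s(I-X)^2-2PA_s+O(\|a\|^2)$, while $\varphi^{-1}(-A_s(I-X)^2)=-A_s(I-X)^2+O(\|a\|^2)$, so in your coordinates the claimed formula fails at order $\|a\|$ in general.

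The repair is simply to make the other linear choice of coordinates: since $M_H$ is strongly pseudoconvex and $a$ is small and real, $A=\sum_{j=1}^d(b_j-2a_j)A_j$ is positive definite, so coordinates can be chosen with $A=I$. Then differentiating $PX^2+AX+\transp{\overline{P}}=0$ in $\Re e a_s$ and evaluating at $a$ gives
\[
A_sX^2+P\left(XX_{\Re e a_s}+X_{\Re e a_s}X\right)-2A_sX+X_{\Re e a_s}+A_s=0,
\]
that is, $\varphi(X_{\Re e a_s})=-A_s(X^2-2X+I)=-A_s(I-X)^2$ with no leftover term. Combining this with the invertibility of $\varphi$ for small $a$ and with your commutation argument then completes the proof.
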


\begin{proof}
 Since $M_H$ is strongly pseudoconvex,   we  choose coordinates for which $\sum_{j=1}^d ({b}_j-{a}_j-\overline{a_j})A_j =I.$ Differentiating  Equation 
\eqref{confi00} in $\Re e { a_s}$ and evaluating at $(a,b- a-\overline{a})$ implies that
$$A_s X^2 + P (XX_{\Re e a_s}+X_{\Re e a_s}X)-2A_sX+X_{\Re e a_s}+A_s=0$$
and so
$$X_{\Re e a_s}+P (XX_{\Re e a_s}+X_{\Re e a_s}X) = -A_s(I-X)^2$$
We then obtain
 \begin{equation*}
 X_{\Re e a_s} = \varphi^{-1}(-A_s (I-X)^2)=-\varphi^{-1}(A_s) (I-X)^2.
 \end{equation*}  
\end{proof}
We recall the following lemma.
\begin{lemma} \cite{be-me2}\label{tortue1}  Let $M_H$ be a strongly Levi nondegenerate quadric given by \eqref{eqred1} and let $b \in \R^d$ be such that $\sum_{j=1}^d b_jA_j$ is invertible.
 Let $a \in \C^d$ be small enough and let  X be the unique $n\times n$ matrix solution of \eqref{confi00} such that $\|X\|<1$.  
 \begin{enumerate}[i.]
\item  The $1$-jet map $\mathfrak j_{1}$ is a local diffeomorphism at $(a,V) \in \C^d\times \C^n$ if and only if 
the  $d\times d$ matrix 
$$\left(\dfrac{\partial}{\partial {\Re e a_s}}\transp\overline{V}(I-\transp\overline {X})K_j(I-X)V\right)_{j,s}$$ is invertible.
\item
For any $s=1,\ldots,d$, we have 
\begin{eqnarray*}
\dfrac{\partial}{\partial {\Re e a_s}}\transp\overline{V}(I-\transp\overline {X})K_j(I-X)V& = & -2\Re e \left(\sum_{r=0}^{\infty} \transp\overline{V}\left(I-\transp \overline {X}\right)^2 {\transp \overline {X}}^rK_jX_{\Re e a_s}X^rV\right).\\
\end{eqnarray*}
\end{enumerate}
\end{lemma}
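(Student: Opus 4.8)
The plan is to prove both parts by a direct computation from the explicit form \eqref{rikip} of $\mathfrak j_1$, using only that each $K_j$ is Hermitian and that $\|X\|<1$; strong pseudoconvexity plays no role. For (i), I would regard $\mathfrak j_1$ as a smooth map of real manifolds in the real coordinates $(\Re e a,\Im m a,V)$ on the source and $(V,(g_j)_j,\Im m a)$ on the target, where $g_j:=\transp\overline{V}(I-\transp\overline{X})K_j(I-X)V$. Since $A_j$ is Hermitian and $\|X\|<1$, the series \eqref{eqK_j} shows $K_j$ is Hermitian, hence so is $(I-\transp\overline{X})K_j(I-X)$, and each $g_j$ is real. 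The key observation is that $d\mathfrak j_1$ is block triangular: the $V$-component and the $\Im m a$-component of $\mathfrak j_1$ are the coordinate projections and contribute identity blocks, so solving $d\mathfrak j_1(\xi)=0$ forces $\xi_V=0$ and $\xi_{\Im m a}=0$, after which the middle block collapses to $\big(\partial g_j/\partial\Re e a_s\big)_{j,s}\,\xi_{\Re e a}=0$ (the blocks $\partial g/\partial V$ and $\partial g/\partial\Im m a$ being irrelevant). Thus $d\mathfrak j_1$ is invertible precisely when the $d\times d$ matrix in (i) is, and the inverse function theorem yields the equivalence.

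For (ii), I would set $W:=(I-X)V$, so that $g_j=\transp\overline{W}K_jW$ with $V$ held fixed, and write $\partial_s:=\partial/\partial\Re e a_s$. The product rule together with $K_j=\transp\overline{K_j}$ gives
$$\partial_s g_j=2\Re e\big(\transp\overline{(\partial_sW)}K_jW\big)+\transp\overline{W}(\partial_sK_j)W,\qquad \partial_sW=-X_{\Re e a_s}V.$$
The crux is $\partial_sK_j$. Rather than differentiate \eqref{eqK_j} termwise, I would read off from it the Stein identity $K_j=A_j+\transp\overline{X}K_jX$; differentiating this in $\Re e a_s$ produces a Stein equation of the same type for $\partial_sK_j$ whose unique small-norm solution is
$$\partial_sK_j=\sum_{r=0}^\infty{\transp\overline{X}}^{\,r}\big(\transp\overline{X_{\Re e a_s}}K_jX+\transp\overline{X}K_jX_{\Re e a_s}\big)X^r.$$
Its two summands are Hermitian adjoints of one another, so sandwiching between $\transp\overline{W}$ and $W$ again collapses to twice a real part.

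Collecting the three contributions, every term takes the form $2\Re e\big(\transp\overline{V}(I-\transp\overline{X})[\cdots]V\big)$, and the bracketed matrix series telescopes. With $B:=K_jX_{\Re e a_s}$, the decisive identity is
$$\sum_{r=0}^\infty(I-\transp\overline{X}){\transp\overline{X}}^{\,r}BX^r+\sum_{r=0}^\infty{\transp\overline{X}}^{\,r+1}BX^r(I-X)=B,$$
which holds because the cross terms cancel pairwise, leaving the telescoping sum $\sum_r{\transp\overline{X}}^{\,r}BX^r-\sum_r{\transp\overline{X}}^{\,r+1}BX^{r+1}=B$. Substituting it produces exactly the claimed formula $-2\Re e\big(\sum_{r}\transp\overline{V}(I-\transp\overline{X})^2{\transp\overline{X}}^{\,r}K_jX_{\Re e a_s}X^rV\big)$. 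I expect the bookkeeping of these matrix series—keeping the powers of $X$ and $\transp\overline{X}$ aligned through the product rule and verifying the pairwise cancellation—to be the only genuine obstacle; the reality and block-triangularity arguments in (i) are routine.
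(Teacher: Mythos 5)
Your proof is correct; note, however, that this paper contains no proof of Lemma \ref{tortue1} to compare against --- the lemma is quoted verbatim from \cite{be-me2}, so your argument can only be judged on its own merits, and it holds up. For (i), the reality of $g_j:=\transp\overline{V}(I-\transp\overline{X})K_j(I-X)V$ (each $K_j$ is Hermitian, as you note) makes the target decomposition $\C^n\times\R^d\times\R^d$ legitimate, and the block-triangular shape of $d\mathfrak j_{1}$ in the coordinates $(\Re e\, a,\Im m\, a,V)\mapsto (V,g,\Im m\, a)$ correctly reduces invertibility of the full square Jacobian to that of the $d\times d$ block $\left(\partial g_j/\partial\Re e\, a_s\right)_{j,s}$; the inverse function theorem then gives the stated equivalence. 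For (ii), I checked your three ingredients: $\partial_s W=-X_{\Re e a_s}V$ for $W=(I-X)V$; the Stein relation $K_j=A_j+\transp\overline{X}K_jX$, which upon differentiation and uniqueness of solutions of Stein equations (valid since $\|X\|<1$) yields your series for $\partial_s K_j$, whose two summands are indeed adjoint to one another; and the telescoping identity with $B=K_jX_{\Re e a_s}$, which holds by absolute convergence and the pairwise cancellation you describe. Assembling these, and conjugating the contribution $-\transp\overline{V}\,\transp\overline{X_{\Re e a_s}}K_j(I-X)V$ (harmless under $\Re e$), gives exactly $-2\Re e\bigl(\sum_{r}\transp\overline{V}(I-\transp\overline{X})^2{\transp\overline{X}}^{r}K_jX_{\Re e a_s}X^rV\bigr)$, with the correct sign. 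A pleasant feature of your route, in contrast with termwise differentiation of \eqref{eqK_j} combined with the explicit formula for $X_{\Re e a_s}$ of Lemma \ref{klop1}, is that $X_{\Re e a_s}$ never needs to be computed, and strong pseudoconvexity is never invoked, consistent with the lemma's hypotheses.

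One step you should make explicit rather than assume: the differentiability of $X$ (hence of $K_j$) with respect to $a$. This follows from the implicit function theorem applied to \eqref{confi00}: the linearization in $X$ is $N\mapsto AN+P(NX+XN)$, which is invertible for $a$ small because $P=O(|a|)$ and $A$ is then close to the invertible matrix $\sum_{j=1}^d b_jA_j$. Without this, writing $X_{\Re e a_s}$ and differentiating the Stein relation is not justified.
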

 
The main theorem is
\begin{theo}\label{propder}
Let $M_H$ be a strongly pseudoconvex quadric given by \eqref{eqred1} and let $b \in \R^d$ be such that $\sum_{j=1}^d b_jA_j$ is invertible.
Assume that $M_H$ is stationary minimal at $0$ for $(a,b-a-\overline{a},V)$ with $a \in \C^d $ sufficiently small.
Then the  $1$-jet map $\mathfrak j_{1}$ given by  \eqref{rikip} is a local diffeomorphism at $(a,V). $
\end{theo}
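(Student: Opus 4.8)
The plan is to reduce the statement to the invertibility of a single $d\times d$ real matrix and then to read that invertibility off a positivity estimate driven by the contraction $\vnorm{X}<1$ together with stationary minimality. First I would invoke Lemma \ref{tortue1}(i): the map $\mathfrak j_1$ is a local diffeomorphism at $(a,V)$ exactly when the real matrix $M=(M_{js})$ with $M_{js}=\frac{\partial}{\partial\Ree a_s}\transp{\overline V}(I-\transp{\overline X})K_j(I-X)V$ is invertible. Using strong pseudoconvexity I normalize $\sum_j(b_j-a_j-\overline{a_j})A_j=I$ and plug the closed form $X_{\Ree a_s}=-\varphi^{-1}(A_s)(I-X)^2$ of Lemma \ref{klop1} into Lemma \ref{tortue1}(ii). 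Since $(I-X)^2$ commutes with $X$ and $\varphi$ (hence $\varphi^{-1}$) commutes with right multiplication by powers of $X$, all the shifts $X^r$ can be pushed onto a single family of orbit vectors $\eta_r:=(I-X)^2X^rV$, and $M_{js}$ collapses to
$$M_{js}=2\,\Ree\sum_{r\ge0}\transp{\overline{\eta_r}}\,K_j\,\varphi^{-1}(A_s)\,\eta_r,$$
so that $\transp{\overline{\eta_r}}$ and $\eta_r$ now sit symmetrically around the middle factors.

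Next I would argue by contradiction. Suppose $M$ is singular and pick $\lambda\in\R^d\setminus\{0\}$ with $M\lambda=0$; set $B=\sum_j\lambda_jA_j$ and $K_B=\sum_j\lambda_jK_j=\sum_{m\ge0}\transp{\overline X}^mBX^m$, so that $B=K_B-\transp{\overline X}K_BX$. Contracting $M\lambda=0$ with $\lambda$ and writing $w_r:=K_B\eta_r$ (using that $K_B$ is Hermitian) gives
$$0=\tfrac12\,\transp\lambda M\lambda=\Ree\sum_{r\ge0}\langle w_r,\varphi^{-1}(B)\eta_r\rangle.$$
Replacing $\varphi^{-1}(B)$ by $B=K_B-\transp{\overline X}K_BX$ and using $X\eta_r=\eta_{r+1}$ turns the main part of the right-hand side into $\sum_r\vnorm{w_r}^2-\Ree\sum_r\langle Xw_r,w_{r+1}\rangle$, which by Cauchy--Schwarz and the AM--GM inequality is bounded below by $(1-\vnorm{X})\sum_r\vnorm{w_r}^2$. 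This is precisely where $\vnorm{X}<1$ enters decisively: it forces the quadratic form to be nonnegative and to vanish only when every $w_r=0$.

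From $w_r=K_B\eta_r=0$ for all $r$ and $B=K_B-\transp{\overline X}K_BX$ I would deduce $B\eta_r=0$ for all $r$; since $\{\eta_r\}$ and $\{X^rV\}$ span the same space, this means $BX^rV=0$ for all $r$, i.e. the nonzero real combination $B=\sum_j\lambda_jA_j$ annihilates the orbit $\mathcal O_{X,V}$. This contradicts stationary minimality at $(a,b-a-\overline a,V)$ (the restrictions $A_j|_{\mathcal O_{X,V}}$ being $\R$-linearly independent), so $M$ is invertible and, by Lemma \ref{tortue1}(i), $\mathfrak j_1$ is a local diffeomorphism at $(a,V)$.

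The main obstacle is the replacement of $\varphi^{-1}(B)$ by $B$: this is exact only at $a=0$, whereas stationary minimality genuinely requires $a\ne0$, so I cannot perturb off $a=0$. The real work is in controlling the error $F=\varphi^{-1}(B)-B=\sum_{r'}Q_{r'}(P,X)BX^{r'}$. Because every $Q_{r'}(P,X)$ carries at least one factor of $P=O(a)$, the vectors $F\eta_r$ can be re-expressed, through the recursion $B\eta_r=w_r-\sum_{m\ge1}\transp{\overline X}^mB\eta_{r+m}$, as an $O(a)$-combination of the $w_{r'}$; a Schur-type estimate then bounds $\Ree\sum_r\langle w_r,F\eta_r\rangle$ by $O(a)\sum_r\vnorm{w_r}^2$, which is absorbed by the $(1-\vnorm{X})\sum_r\vnorm{w_r}^2$ of the main term once $a$ is small. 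Making this absorption quantitative, and checking convergence of the series in $P$ and $X$, is the technical heart of the argument.
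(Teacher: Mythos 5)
Your proposal is correct, and after the shared reduction it takes a genuinely different route from the paper. Both arguments start identically: Lemma \ref{tortue1} plus Lemma \ref{klop1}, the normalization $\sum_j(b_j-a_j-\overline{a_j})A_j=I$, and the passage to the vectors $\eta_r=(I-X)^2X^rV$ (the paper phrases this as replacing $V$ by $V'=(I-X)^2V$ and cites Lemma 6.7 of \cite{tu} for the preservation of stationary minimality; your span argument $\spanc_\R\{\eta_r\}=\mathcal O_{X,V}$, valid because $(I-X)^2$ is an injective endomorphism of the finite-dimensional $X$-invariant space $\mathcal O_{X,V}$, replaces that citation). The divergence is in the positivity step. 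The paper argues perturbatively in $a$: it expands $D_rW$ into homogeneous polynomials in $a,\overline a$, isolates the minimal degree $k_0$ and the index set $I$ realizing it, shows through a four-case analysis that all corrections coming from the tails of $K_j$ and from $\varphi^{-1}$ are $O(\|a\|^{2k_0+2})$, and must then invoke upper semicontinuity of $k_0$ in the direction $W$ plus compactness of the sphere to make the smallness of $a$ uniform. You instead prove a coercivity bound that is uniform in the direction by construction: the exact identities $B=K_B-\transp\overline{X}K_BX$ and $X\eta_r=\eta_{r+1}$ give $B\eta_r=w_r-\transp\overline{X}w_{r+1}$, so Cauchy--Schwarz and AM--GM bound the main term below by $(1-\|X\|)\sum_r\|w_r\|^2$, while the error $\varphi^{-1}(B)-B=\sum_{r'}Q_{r'}(P,X)BX^{r'}$, re-expressed through the same identity, is controlled by $\bigl(\sum_{r'}\|Q_{r'}(P,X)\|\bigr)\sum_r\|w_r\|^2=O(\|a\|)\sum_r\|w_r\|^2$ and absorbed. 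Stationary minimality then enters only qualitatively at the very end (all $w_r=0$ forces $B\eta_r=0$, so $\sum_j\lambda_jA_j$ annihilates $\mathcal O_{X,V}$ and $\lambda=0$), whereas the paper uses it quantitatively as the nonvanishing lowest-degree coefficient. What your route buys: no degree bookkeeping, no semicontinuity/compactness argument, an explicit coercivity constant, and a smallness threshold for $a$ depending only on the $A_j$; what the paper's route buys is a precise identification of which terms carry the positivity, which feeds directly into its Definition \ref{definondeg100} and the open questions. The one step you defer, the bound $\sum_{r'}\|Q_{r'}(P,X)\|=O(\|a\|)$, is genuinely needed but follows in a few lines from the Neumann series for $\varphi^{-1}=\sum_m(-1)^m\Psi^m$ with $\Psi(N)=P(NX+XN)$, whose level-$m$ part contributes at most $(2\|P\|)^m$; so this is a fillable detail, not a gap.
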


\begin{proof} 
Assume that $M_H$ is stationary minimal at $0$ for $(a,b-a-\overline{a},V)$ with $a \in \C^d $ sufficiently small. Due to the strong pseudoconvexity of $M_H$, we  assume that the 
matrix $A:=\sum_{j=1}^d (b_j -a_j-\overline{a_j})A_j$ is the identity.
 In light of  Lemma \ref{tortue1}, we need to show that 
$$Y:=\Re e\left(\sum_{r=0}^{\infty} \transp\overline{V} \left(I-\transp \overline {X}\right)^2 {\transp \overline {X}}^rK_jX_{\Re e a_s}X^r V\right)_{j,s}$$
  is invertible. In fact, we will prove it is negative definite. Using  Lemma \ref{klop1}, we have:
\begin{eqnarray*}
Y& =   &- \Re e\left(\sum_{r=0}^{\infty} \transp\overline{V} \left(I-\transp \overline {X}\right)^2 {\transp \overline {X}}^rK_j\varphi^{-1}(A_s)X^r (I-X^2)V\right)_{j,s}  \\ 
& = &- \Re e\left(\sum_{r=0}^{\infty} \transp\overline{V'} {\transp \overline {X}}^rK_j\varphi^{-1}(A_s)X^r V'\right)_{j,s}  \\ 
\end{eqnarray*}
where $V'= (I-X)^2V$. Since the quadric $M_H$ is stationary minimal at $0$ for $(a,b-a-\overline{a},V)$,  it is also  stationary minimal at 
$0$ for $(a,b-a-\overline{a},(I-X)^2V)$ (see the proof of Lemma 6.7 \cite{tu}, see also \cite{be-me2}.)
  We then  abusively write $V$ in place of $V'$, and we need to show that the matrix $\Re e\left(\sum_{r=0}^{\infty} \transp\overline{V} {\transp \overline {X}}^rK_j\varphi^{-1}(A_s)X^r V\right)_{j,s}$ is positive definite.
	
Let $W \in\C^d$ be a nonzero unit vector. Since $M$ is stationary minimal, we have 
$$\transp \overline{W}\Re e \left(\sum_{r=0}^{\infty} \transp \overline V{\transp \overline {X}}^rA_jA_sX^r V\right)_{j,s}W = \sum_{r=0}^\infty \|D_r W\|^2 + \sum_{r=0}^\infty \|\overline{D_r} W\|^2> 0,$$ 
where $D_r$ is the $n \times d$ matrix whose $s^{th}$ column is $A_sX^r V$. Since $X$ is solution of Equation \eqref{confi00}, the term $D_rW$ is a sum of homogeneous polynomials in $a,\overline{a}$ of degree greater than or equal to $r$ (and congruent to $r$ modulo $2$). Denote by $k(r)$ the minimal degree appearing in this decomposition and define $k_0:=\min_{r\geq 0} k(r).$ We denote by $I$ the finite set of integers $ r$ which realize the mininum, that is 
 such that  $D_rW$ contains terms of degree $k_0$. Note that if $r \notin I$ then $D_rW=O(\|a\|^{k_0+1})$. Note also that the minimal degree appearing $\sum_{r=0}^\infty \|D_r W\|^2$ is exactly $2k_0$.  
Moreover, the function $k_0$ is a upper semi-continuous  in $W$, which by compactness implies that it is bounded by above; this fact is crucial since it allows to choose $a$ smaller if necessary. 
Now define
$$S_r:= \transp \overline W\left(\transp\overline{V}  {\transp \overline {X}}^rK_j\varphi^{-1}(A_s)X^r V\right)_{j,s}W.$$ 
In case $r\in I$ then 
$$S_r=\|D_rW\|^2+O(\|a\|^{2k_0+2})>0.$$ We claim that if $r\notin I$ then $S_r=O(\|a\|^{2k_0+2})$. To show this, note that, in that case,  $S_r$ only contains terms of the form 
$$\transp \overline W\left(\transp\overline{V}  {\transp \overline {X}}^{r+\ell_1}A_jX^{\ell_1} R_{k,\ell_2}(P,X)A_sX^{r+\ell_2} V\right)_{j,s}W=\transp \overline{D_{r+\ell_1}W}X^{\ell_1}R_{k,\ell_2}(P,X)D_{r+\ell_2}W,$$
where $R_{k,\ell_2}(P,X)$ is a homogeneous polynomial of degree $k$ in $P$ and $k-\ell_2$ in $X$; this is due to the form of the inverse of $\varphi$ (see Equation \eqref{symph}).
Whether $r+\ell_1$ and $r+\ell_2$ are in $I$ or not, we have 
$$\transp \overline{D_{r+\ell_1}W}X^{\ell_1}R_{k,\ell_2}(P,X)D_{r+\ell_2}W=O(\|a\|^{2k_0+2}).$$ Indeed, we distinguish the four following cases.
\begin{itemize}
\item  If $r+\ell_1 \notin I$ and $r+\ell_2 \notin I$, this is clear since $D_{r+\ell_j}W=O(\|a\|^{k_0+1})$, $j=1,2$.

\item The case $r+\ell_1 \notin I$ and $r+\ell_2 \in I$ can only occur if $k\geq 1$.  In that case, we have $D_{r+\ell_1}W=O(\|a\|^{k_0+1})$ and $D_{r+\ell_2}W=O(\|a\|^{k_0})$. Due to the contribution of  the term $R_{k,\ell_2}(P,X)$, we obtain  $O(\|a\|^{2k_0+2})$.  

\item The case $r+\ell_1 \in I$ and $r+\ell_2 \notin I$ can only occur if $\ell_1\geq 1$.  We have $D_{r+\ell_1}W=O(\|a\|^{k_0})$, $D_{r+\ell_2}W=O(\|a\|^{k_0+1})$ and with the contribution of the term $X^{\ell_1}$ we obtain  $O(\|a\|^{2k_0+2})$.

\item Finally, the case $r+\ell_1 \in I$ and $r+\ell_2 \in I$ can only occur if $\ell_1\geq 1$ and $k\geq 1$.  We have $D_{r+\ell_j}W=O(\|a\|^{k_0})$, $j=1,2$,
and  with the contribution of the terms $X^{\ell_1}$  and $R_{k,\ell_2}(P,X)$ we also obtain  $O(\|a\|^{2k_0+2})$.
\end{itemize}
At this stage, we have proved that
$$\sum_{r=0}^{\infty} S_r=\sum_{r \in I}\|D_rW\|^2+O(\|a\|^{2k_0+2})$$
 and so is positive since $a$ can be taken smaller if necessary. This proves that $Y$ is negative definite and concludes the proof of the theorem.

\end{proof}

The previous theorem and Theorem \ref{cru} imply the following result.
\begin{cor} Let $M_H$ be a strongly pseudoconvex quadric given by \eqref{eqred1}.  Assume that the matrices 
$A_1,\ldots,A_d$ are linearly independent. Then there exist $a \in \R^d$ and $V \in \C^n$ such that the  $1$-jet map $\mathfrak j_{1}$ is a local diffeomorphism at $(a,V).$
\end{cor}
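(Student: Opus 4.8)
The plan is to derive the Corollary by chaining the two preceding results: Theorem \ref{cru} produces a triple $(a,b,V)$ for which $M_H$ is stationary minimal at $0$, and Theorem \ref{propder} then converts this stationary minimality into the local diffeomorphism property of $\mathfrak j_1$.

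First I would invoke Theorem \ref{cru}. Since $M$ is strongly pseudoconvex and $A_1,\ldots,A_d$ are linearly independent, it provides $a,b \in \R^d$ and $V \in \C^n$ such that $M$ --- equivalently $M_H$, since stationary minimality depends only on the shared data $A_1,\ldots,A_d$, $a$, $b$, $V$ --- is stationary minimal at $0$ for $(a,b-2a,V)$. As in the proof of that theorem, $b$ may be taken with $\sum_{j=1}^d b_j A_j$ positive definite, hence invertible, which is the standing hypothesis on $b$ in Theorem \ref{propder}. Moreover $a$ is real, so $\overline a = a$ and $b-2a = b-a-\overline a$; the stationary minimality obtained is thus precisely stationary minimality for the triple $(a,b-a-\overline a,V)$ required by Theorem \ref{propder}.

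The only point demanding attention is the smallness of $a$, which Theorem \ref{propder} requires. This is supplied by the construction underlying Theorem \ref{cru}, where $a = \lambda_0 a_0$ with $a_0$ arbitrarily small and $\lambda_0$ chosen so that $h(\lambda_0) \ne 0$; as $h$ has a nonvanishing coefficient in its lowest-order (degree $d$) term, one has $h(\lambda) \ne 0$ for all sufficiently small nonzero $\lambda$, so $\lambda_0$ and hence $a$ may be taken as small as needed. With $(a,b,V)$ now meeting every hypothesis of Theorem \ref{propder}, that theorem shows $\mathfrak j_1$ is a local diffeomorphism at $(a,V)$, as claimed. The deduction is short; its whole substance lies in reconciling the two hypothesis sets, the sole genuine subtlety being to place the parameter $a$ furnished by Theorem \ref{cru} inside the small-parameter regime where Theorem \ref{propder} is valid.
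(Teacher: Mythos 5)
Your proof is correct and follows exactly the paper's route: the paper derives this corollary precisely by combining Theorem \ref{cru} with Theorem \ref{propder}, which is what you do. Your additional care in reconciling the hypotheses (realness of $a$ so that $b-2a=b-a-\overline a$, invertibility of $\sum_j b_jA_j$, and the smallness of $a=\lambda_0 a_0$ via the nonvanishing of $h$ at small $\lambda$) is a faithful and welcome elaboration of details the paper leaves implicit.
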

As a direct consequence, we recover  Theorem \ref{strictly} by the usual stationary disc method \cite{be-bl, be-bl-me, be-me}. 

\subsection{Open questions}
An interesting feature of Theorem \ref{propder} is the fact that its proof gives a roadmap for extending the result to quadrics and submanifolds which are not necessarily strongly pseudoconvex. In this vein, we define
\begin{defi}\label{definondeg100}
  Let $M$ be a real submanifold given by \eqref{eqred0}. Assume $M$ is strongly Levi nondegenerate at $0$ and let $b\in \R^d$ be such that $\sum_{j=1}^d b_jA_j$ is invertible. Let $a \in \C^d$ be sufficiently small and set  $A:=\sum_{j=1}^d (b_j -a_j-\overline{a_j})A_j.$  
 We say that  $M$ is (resp. {\it strongly}) {\it  $\mathfrak{D}(a)$-nondegenerate} at $0$  if there exists $V\in \C^n$ such that  the  matrix 
\begin {equation*}
\Re e \left(\sum_{r=0}^{\infty} \transp \overline V{\transp \overline {X}}^rA_j A^{-1}A_sX^r V\right)_{j,s}
\end {equation*}
is nondegenerate (resp. positive definite). 
\end{defi}

Note that when $a = 0$, we recover the definition of  $\mathfrak{D}$-nondegeneracy.
The following lemma is immediate.
\begin{lemma} Definition \ref{definondeg100} is  independent of the choice of holomorphic coordinates. 
\end{lemma}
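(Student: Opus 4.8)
The plan is to reduce the lemma to tracking how each ingredient of Definition~\ref{definondeg100} transforms under the linear holomorphic coordinate changes that preserve the normal form \eqref{eqred0}. Since the matrix in the definition involves only the Hermitian matrices $A_1,\dots,A_d$, that is, the weight-two part of $M$, the higher order part of any such change is irrelevant; a holomorphic transverse shift $w_j\mapsto w_j+\transp z B_jz$ leaves the Hermitian forms $\transp\overline z A_jz$ untouched, so the relevant group is generated by two elementary moves: a change $z\mapsto Cz$ with $C\in\GL_n(\C)$ in the base variable, and a real linear mixing $w\mapsto Sw$ with $S\in\GL_d(\R)$ of the transverse variables. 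First I would record, for each move, the induced transformation of $A_j$, $A$, $P$, the distinguished solution $X$ of \eqref{confi00}, the vector $V$, and the parameters $a,b$.

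For the base change one gets $A_j\mapsto\transp\overline C A_jC$, $A\mapsto\transp\overline C AC$, $P\mapsto\transp\overline C PC$ with $a,b$ unchanged. Substituting $X\mapsto C^{-1}XC$ into \eqref{confi00} and factoring out $\transp\overline C$ on the left and $C$ on the right shows that $C^{-1}XC$ solves the transformed equation; I would then argue that, since it reduces to $0$ at $a=0$ and depends continuously on $a$, it is exactly the distinguished small solution, so the norm condition is preserved for $a$ small. Taking correspondingly $V\mapsto C^{-1}V$, the factors $\transp\overline C$, $C$, $C^{-1}$, $(\transp\overline C)^{-1}$ telescope in each summand $\transp\overline V{\transp \overline {X}}^rA_jA^{-1}A_sX^rV$, leaving it unchanged; hence the matrix of Definition~\ref{definondeg100}, together with the solvability of the condition ``there exists $V$'', is invariant under this move. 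For the mixing, writing $T=S^{-1}$ one gets $A_j\mapsto\sum_k T_{jk}A_k$ while $P$, $A$, $X$, $V$ are unchanged and $a,b$ transform contragrediently; pulling the two sums out of each entry shows that the matrix $Y$ of the definition transforms by the real congruence $Y\mapsto TY\transp T$.

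Finally I would conclude by noting that a congruence by $T\in\GL_d(\R)$ preserves nondegeneracy, since the determinant is multiplied by $(\det T)^2\neq0$, and positive definiteness, since $\transp W (TY\transp T)W=\transp U\,Y\,U$ with $U=\transp T W$; thus both versions of $\mathfrak{D}(a)$-nondegeneracy are unaffected once $a$ is replaced by its transform. I expect the only delicate points, and hence the main (mild) obstacle, to be the verification that $C^{-1}XC$ is the distinguished solution with norm $<1$ rather than merely an algebraic solution of \eqref{confi00}, and the careful bookkeeping of the $\transp\overline C$ and $C$ factors needed for the cancellation to be exact; the positive definiteness step is best read through the associated real quadratic form, as the matrix $Y$ need not be symmetric a priori.
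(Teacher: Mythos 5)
Your proof is correct: the two elementary moves $z\mapsto Cz$ (giving $A_j\mapsto\transp\overline C A_jC$, $X\mapsto C^{-1}XC$, $V\mapsto C^{-1}V$, with exact cancellation in each summand) and the real mixing of the transverse variables (giving a real congruence $Y\mapsto SY\transp S$ with $a,b$ transformed contragrediently) are exactly what is needed, and your attention to the small-norm solution of \eqref{confi00} and to reading definiteness through the quadratic form are sound. The paper offers no proof at all --- it declares the lemma ``immediate'' --- and your argument is precisely the standard verification the authors have in mind, just written out in full.
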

We also have
\begin{lemma} If  $M$ is  {\it  $\mathfrak{D}(a)$-nondegenerate} at $0$  then $M$ is stationary minimal at $0$ for $(a,b -a-\overline a,V)$ for some $V\in \C^n$.  
\end{lemma}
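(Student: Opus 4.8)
The plan is to prove the contrapositive after rephrasing both hypotheses in terms of the vectors $A_sX^rV$. Following the notation of the proof of Theorem \ref{propder}, for each $r\ge 0$ let $D_r$ denote the $n\times d$ matrix whose $s^{th}$ column is $A_sX^rV$, so that $D_rc=\left(\sum_{s=1}^d c_sA_s\right)X^rV$ for every $c\in\R^d$. The first step I would carry out is to record the (purely linear-algebraic) reformulation of Definition \ref{defstatmin}: $M$ is stationary minimal at $0$ for $(a,b-a-\overline a,V)$ if and only if there is no nonzero $c\in\R^d$ with $D_rc=0$ for all $r\ge 0$. Indeed, since $\mathcal{O}_{X,V}=\spanc_\R\{X^rV : r\ge 0\}$ and each $\sum_s c_sA_s$ is $\C$-linear, the vanishing of $\sum_s c_sA_s$ as a map on the orbit space is exactly the vanishing of $D_rc$ for all $r$; this is the same reading of stationary minimality already used in the computation $\sum_r\|D_rW\|^2+\sum_r\|\overline{D_r}W\|^2>0$ in the proof of Theorem \ref{propder}.

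Next I would take the vector $V$ furnished by the $\mathfrak{D}(a)$-nondegeneracy hypothesis and write $Y$ for the associated $d\times d$ matrix of Definition \ref{definondeg100}, that is $Y=\Re e\left(\sum_{r=0}^\infty \transp\overline{V}\,{\transp \overline {X}}^rA_jA^{-1}A_sX^rV\right)_{j,s}$, which is invertible by assumption. Suppose, for contradiction, that $M$ is not stationary minimal at $0$ for $(a,b-a-\overline a,V)$. By the first step there is a nonzero $c\in\R^d$ with $D_rc=0$ for every $r$. The key computation is then to apply $Y$ to $c$: because the components $c_s$ are real they may be moved inside the real part and the summation, and the combination $\sum_s c_sA_s$ regroups the columns of $D_r$ into $D_rc$, so that
\[
(Yc)_j=\Re e\left(\sum_{r=0}^\infty \transp\overline{V}\,{\transp \overline {X}}^rA_jA^{-1}D_rc\right)=0\qquad(j=1,\dots,d),
\]
since $D_rc=0$ for all $r$. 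Thus the nonzero vector $c$ lies in the kernel of $Y$, contradicting its invertibility. Hence $M$ is stationary minimal at $0$ for $(a,b-a-\overline a,V)$, with the very $V$ provided by the nondegeneracy hypothesis, which is the desired conclusion.

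The step I would be most careful about — and the only genuinely delicate one — is the equivalence recorded in the first paragraph, namely that ``the matrices $A_1,\dots,A_d$ restricted to $\mathcal{O}_{X,V}$ are $\R$-linearly independent'' is to be read as the nonexistence of a nonzero real relation $\sum_s c_sA_s$ vanishing identically on $\mathcal{O}_{X,V}$, and that this is in turn equivalent to $D_rc=0$ for all $r$. This rests only on $\mathcal{O}_{X,V}$ being the real span of the generators $X^rV$ together with the $\C$-linearity of each $A_s$, and it is precisely the reading already in force in the proof of Theorem \ref{propder}. Once this translation is fixed, the implication is essentially immediate: the $\C$-linearity of $A^{-1}$ lets the relation $D_rc=0$ pass through the middle factor $A_jA^{-1}$ and collapse every entry of $Yc$, with no smallness of $a$, positivity, or convergence estimate beyond the one already built into the definition of $Y$. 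The same argument, run with ``positive definite'' in place of ``nondegenerate,'' would likewise recover stationary minimality from \emph{strong} $\mathfrak{D}(a)$-nondegeneracy, consistent with the strongly pseudoconvex computation carried out in Theorem \ref{propder}.
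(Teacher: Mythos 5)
Your proof is correct and takes essentially the same route as the paper's own: the paper likewise reads stationary minimality as the nonexistence of a nonzero real vector $W$ with $D_rW=0$ for all $r$, uses the realness of $W$ to pull it through the real part and collapse the columns of the $\mathfrak{D}(a)$-matrix, and concludes $W=0$ from nondegeneracy. The only difference is presentational (you argue by contradiction with the given $V$, the paper argues directly), so there is nothing to add.
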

\begin{proof}
Assume that $\lambda_1,\ldots,\lambda_d \in \R$ are such that  $\sum_{j=1}^d\lambda_jA_jX^rV=0$ for all $r=0,1,2,\ldots$. Set $W=\transp(\lambda_1,\ldots,\lambda_d) \in \R^d$ and we consider the $n \times d$ matrix $D_r$ whose $s^{th}$ column is $A_sX^r V$. We then have $D_rW=0$ for all $r=0,1,2,\ldots$. It follows that

\begin{eqnarray*}
\Re e \left(\sum_{r=0}^{\infty} \transp \overline V{\transp \overline {X}}^rA_j A^{-1}A_sX^r V\right)_{j,s}W& =   &  \transp{\overline{D_r}}A^{-1}D_rW +   \transp{D_r}\overline{A}^{-1}\overline{D_r}W 
 \\ 
& = &  \transp{D_r}\transp{A}^{-1}\overline{D_r W} =0\\ 
\end{eqnarray*}
Since $M$ is $\mathfrak{D}(a)$-nondegenerate, it follows that $W=0$.   
\end{proof}
We also note that in case $M$ is strongly pseudoconvex then if $M$  is stationary minimal at $0$ for $(a,b -a-\overline a,V)$ then it is  (strongly) $\mathfrak{D}(a)$-nondegenerate at $0$; in fact, according to Theorem \ref{cru}, if $M$ is strongly pseudoconvex and Levi generating then it is (strongly) $\mathfrak{D}(a)$-nondegenerate for some $a \in \C^d$.

In  \cite{be-me}, we conjectured that if $M$  is a $\mathcal{C}^4$ generic  real submanifold strongly Levi nondegenerate of the form  \eqref{eqred0}, and admitting a nondefective stationary disc passing through  $0$, then  any germ  at $0$ of  CR automorphism  of $M$ of class $\mathcal{C}^3$ is  uniquely determined by its $2$-jet at $p.$ The key point in the conjecture is to show that the $1$-jet map $\mathfrak j_{1}$ is a local diffeomorphism. In \cite{be-me}, we proved that if $M_H$ is strongly Levi nondegenerate quadric and the map $\mathfrak j_{1}$ is a local diffeomorphism, then $M_H$ is stationary minimal. At the moment, it is not clear to us how to obtain the converse.

\vspace{0.3cm}
\noindent {\bf Questions.} {\it Consider a $\mathcal{C}^4$ generic  real submanifold $M \subset \C^{n+d}$. Assume that $M$ is (strongly) $\mathfrak{D}(a)$-nondegenerate at $0$.}
\begin{enumerate}[i.]
\item {\it Is the $1$-jet map $\mathfrak j_{1}$  a local diffeomorphism at $(a,V)$ for some $V \in \C^n$?}
 \item {\it Are germs  at $0$ of biholomorphisms sending $M$ into itself uniquely determined by their $2$-jet at $0$?}    
 \item {\it Are germs  at $0$ of  CR automorphisms  of $M$ of class $\mathcal{C}^3$  uniquely determined by their $2$-jet at $0$?}    
 \end{enumerate}
\vspace{0.3cm}
 Note that although i. implies ii. and iii., it may be possible to prove the second and third points  ii. and iii. without using the stationary disc method via different techniques.

\vskip 1cm
{\small
\noindent Florian Bertrand\\
Department of Mathematics,\\
American University of Beirut, Beirut, Lebanon\\
{\sl E-mail address}: fb31@aub.edu.lb\\

\noindent Francine Meylan \\
Department of Mathematics\\
University of Fribourg, CH 1700 Perolles, Fribourg\\
{\sl E-mail address}: francine.meylan@unifr.ch\\
} 

\end{document}